\def\@cite#1#2{{\m@th\upshape\bfseries%
[{#1\if@tempswa{\m@th\upshape\mdseries, #2}\fi}]}} \makeatother
\theoremstyle{plain}
\newtheorem{thm}[subsection]{Theorem}
\newtheorem{cor}[subsection]{Corollary}
\newtheorem{prop}[subsection]{Proposition}
\newtheorem{lem}[subsection]{Lemma}
\theoremstyle{definition}
\newtheorem{rem}[subsection]{Remark}
\newtheorem{defn}[subsection]{Definition}
\newtheorem{eg}[subsection]{Example}
\newcommand{\bA}{{\mathbb{A}}}
\newcommand{\bB}{{\mathbb{B}}}
\newcommand{\bC}{{\mathbb{C}}}
\newcommand{\bF}{{\mathbb{F}}}
\newcommand{\bN}{{\mathbb{N}}}
\newcommand{\bT}{{\mathbb{T}}}
  \newcommand{\A}{{\mathcal{A}}}
  \newcommand{\B}{{\mathcal{B}}}
  \newcommand{\C}{{\mathcal{C}}}
  \newcommand{\E}{{\mathcal{E}}}
  \newcommand{\F}{{\mathcal{F}}}
\renewcommand{\H}{{\mathcal{H}}}
  \newcommand{\K}{{\mathcal{K}}}
\renewcommand{\L}{{\mathcal{L}}}
\renewcommand{\O}{{\mathcal{O}}}
  \newcommand{\R}{{\mathcal{R}}}
  \newcommand{\W}{{\mathcal{W}}}
\newcommand{\fA}{{\mathfrak{A}}}
\newcommand{\fS}{{\mathfrak{S}}}
\newcommand{\upchi}{{\raise.35ex\hbox{\ensuremath{\chi}}}}
\newcommand{\id}{{\operatorname{id}}}
\newcommand{\spn}{\operatorname{span}}
\newcommand{\Fn}{\mathbb{F}_n^+}
\newcommand{\Fock}{\ell^2(\Fn)}
\newcommand{\mt}{\varnothing}
\newcommand{\ol}{\overline}
\newcommand{\sot}{\textsc{sot}}
\newcommand{\wot}{\textsc{wot}}
\newcommand{\vn}{\text{vN}}
\newcommand{\otl}{\, \ol\otimes\, }
\begin{document}
\title[Free Semigroup Algebras and Hopf Algebras]
{Analytic Free Semigroup Algebras \\ and Hopf Algebras}
\author[D. Yang]
{Dilian Yang}
\address{Dilian Yang,
Department of Mathematics $\&$ Statistics, University of Windsor, Windsor, ON
N9B 3P4, CANADA} \email{dyang@uwindsor.ca}

\begin{abstract}
Let $\fS$ be an analytic free semigroup algebra. In this paper, we explore richer structures of
$\fS$ and its predual $\fS_*$.
We prove that $\fS$ and $\fS_*$ both are Hopf algebras.
Moreover, the structures of $\fS$ and $\fS_*$ are closely connected with each other:
There is a bijection between the set of completely bounded representations of $\fS_*$ and
the set of corepresentations of $\fS$ on one hand,
and $\fS$ can be recovered from the coefficient operators of completely bounded representations of $\fS_*$
on the other hand. As an amusing application of our results, the (Gelfand) spectrum of $\fS_*$ is identified.
Surprisingly, the main results of this paper seem new even in the classical case.
\end{abstract}

\subjclass[2000]{47L80, 16Txx.}
\keywords{free semigroup algebra, Hopf algebra, completely bounded representation, corepresentation.}
\thanks{The author was partially supported by an NSERC Discovery grant.}

\date{}
\maketitle


\section{Introduction}

Let $S_i$ ($i=1,...,n$) be operators acting on a Hilbert space $\H$. The $n$-tuple $S=[S_1\,\cdots\,S_n]$ is said to be isometric
if it is an isometry from $\H^n$ to $\H$, equivalently,
\[
S_i^*S_j=\delta_{i,j}I\quad (i,j=1,...,n).
\]
On one hand, it is well known that the C*-algebra generated
by an isometric $n$-tuple $S$ is isomorphic to the Cuntz algebra $\O_n$ if $\sum_iS_iS_i^*=I$,
and isomorphic to the Cuntz-Toeplitz algebra $\E_n$ if $\sum_iS_iS_i^*<I$.
On the other hand, the unital norm closed (non-selfadjoint) operator algebra generated by $S$
is completely isometrically isomorphic to Popescu's non-commutative disk algebra $\A_n$  (cf. \cite{Pop96}).
Because of those rigidities,
Davidson and  Pitts initiated the study of free semigroup algebras in \cite{DavPit99}
in order to study the fine structure of isometric $n$-tuples.
Since then, free semigroup algebras have attracted a great deal of attention.
See, for example, \cite{Dav2, Dav1, DavKatPit, DKS, DLP, DavPit, DavPit98, DavPit99,
 DavWri,  DavYang, Ken0, Ken}.
It should be mentioned that the structure of isometric tuples has been completely described
by Kennedy in \cite{Ken} very recently. It turns out that an isometric tuple has a higher-dimensional Lebesgue-von Neumann-Wold decomposition.

A free semigroup algebra $\fS$ is the unital $\wot$-closed (non-selfadjoint) operator algebra generated by
an isometric tuple.
The prototypical example of free semigroup algebras is the non-commutative analytic Toeplitz algebra $\L_n$ generated
by the left regular representation of the free semigroup $\Fn$ with $n$ generators.
The algebra ${\L_n}$ was introduced in \cite{Pop91} by Popescu  in connection with a non-commutative von Neumann inequality, and
plays a prominent role in the theory of free semigroup algebras.
It is singled out to name a class of free semigroup algebras analytic:
A free semigroup algebra $\fS$ generated by $S$ is said to be analytic if it is algebraically isomorphic to $\L_n$.
Analyticity plays a fundamental role in the existence of wandering vectors. Recall that a wandering vector for
an isometric tuple $S$ acting on $\H$ is a unit vector $\xi$ in $\H$ such that the set $\{S_w\xi:w\in\Fn\}$ is orthonormal.
Here $S_w=S_{i_1}\cdots S_{i_k}$ for a word $w=i_1\cdots i_k$ in $\Fn$.
It was conjectured in \cite{DavKatPit} that every
analytic free semigroup algebra has wandering vectors. This conjecture has been settled in \cite{Ken0} recently.
It is shown there that a free semigroup algebra either has a wandering vector, or is
a von Neumann algebra.

The non-commutative disk algebra $\A_n$  is the unital norm closed algebra generated by
the left regular representation of $\Fn$.
It was also introduced by Popescu in \cite{Pop91}.
He showed in \cite{Pop96} that the unital norm closed algebra $\fA_n$ generated by an arbitrary isometric $n$-tuple
is completely isometrically isomorphic to $\A_n$.
The algebra $\A_n$ also plays an important role in studying free semigroup algebras.
The free semigroup algebra $\fS$ generated by an isometric $n$-tuple $S$ is said to be absolutely continuous
if the representation of $\A_n$ induced by $S$ can be extended to a weak*-continuous representation of
$\L_n$. This notion was introduced in \cite{DLP} in order to obtain a natural analogue of
the measure-theoretic notion of absolute continuity, and played a central role there.

From the definitions and \cite[Theorem 1.1]{DavKatPit} (cf. Theorem \ref{T:DKP} below), one sees that analyticity
implies absolute continuity. The converse fails in the classical case
(i.e., when $n=1$). Refer to, for example, \cite{Ken, Wer}. However it holds true in higher-dimensional
cases \cite{Ken}.

Let $\fS$ be an analytic free semigroup algebra generated by an isometric $n$-tuple.
The main purpose of this paper is to explore richer structures of
$\fS$ and its predual $\fS_*$ (recall from \cite{DavWri} that $\fS$ has a unique predual), so that
one could study those objects from other points of view. This could lead us to understand them better and deeper.
More precisely, we show that both $\fS$ and $\fS_*$ are Hopf algebras (Theorems \ref{T:abscon} and  \ref{T:Conv}).
Moreover, the structures of $\fS$ and $\fS_*$ are closely connected with each other.
We prove that there is a bijection between
the set of completely bounded representations of $\fS_*$ and
the set of corepresentations of $\fS$ (Theorem \ref{T:corep}),
and that the following duality result holds true:
$\fS$ can be recovered from the coefficient operators of completely bounded representations of $\fS_*$ (Theorem \ref{T:dual}).
Furthermore, as an amusing application of Theorem \ref{T:corep},
we show that  the (Gelfand) spectrum of $\fS_*$ is precisely $\Fn$ (Theorem \ref{T:char}).
Rather surprisingly, according to the author's best knowledge,
the main results of this paper seem new even in the classical case,
that is, when $\fS$ is the analytic Toeplitz algebra.

It should be pointed out that this work connects with harmonic analysis.
It is shown in \cite{DavPit99, DavWri} that $\L_n$ has a unique predual
\[
{\L_n}_*=\{[\xi\eta^*]: \xi,\eta\in\Fock\},
\]
where $[\xi\eta^*]$ is the rank one linear functional given by $[\xi\eta^*](A)=(A\xi, \eta)$ for all $A\in\L_n$.
If we regard $\L_n$ as a non-selfadjoint analogue of the group von Neumann algebra $\vn(\bF_n)$ of the free group $\bF_n$
with $n$ generators, then ${\L_n}_*$ is analogous to  the Fourier algebra $A(\bF_n)$,
the predual of $\vn(\bF_n)$. See, for example, \cite{Eym} for more information about Fourier algebras.
So one could think of ${\L_n}_*$ as a sort
of ``Fourier algebra of the free semigroup $\Fn$''.
To seek an algebra structure for ${\L_n}_*$ is actually the starting motivation of this work.
This point of view, in my opinion, could be fruitful in further studying free semigroup algebras.
In this paper, we shall  see certain analogy between ${\L_n}_*$
and $A(\bF_n)$.

The rest of the paper is organized as follows. In Section \ref{S:Pre}, some preliminaries on free semigroup algebras,
dual algebras and operator theory are given. Since the non-commutative analytic algebras play a very
crucial role in our paper, we take a closer look at them and prove two main results
in Section \ref{S:Ln}. Firstly, we prove that $(\L_m\ol\otimes \L_n)_*$ and ${\L_m}_*\widehat\otimes {\L_n}_*$ are completely
isomorphically isomorphic. This isomorphism will be used to prove that ${\L_{n}}_*$ is a completely contractive Banach algebra.
Secondly, it is shown that for any $k\ge 1$, the isometric $n$-tuple $[L_1^{\otimes k}\, \cdots \, L_n^{\otimes k}]$
induced from the left regular representation of $\Fn$
is analytic (actually, more can be said). We will apply this result to construct a Hopf algebra structure for an arbitrary analytic free semigroup $\fS$.
In Section \ref{S:AnaHop}, we show that
every analytic free semigroup algebra $\fS$ is a Hopf dual algebra, while in Section \ref{S:HCA}, we prove that its predual $\fS_*$
is a Hopf convolution dual algebra. Those structures between $\fS$ and $\fS_*$ are tied by completely bounded representations
of $\fS_*$. In Section \ref{S:cbrep}, we show that, on one hand,
there is a one-to-one correspondence between the set of completely bounded representations of $\fS_*$ and
the set of corepresentations of $\fS$; and
on the other hand, $\fS$ can be recovered from the coefficient operators of completely bounded representations
of $\fS_*$. Moreover,  it is shown that  the (Gelfand) spectrum of $\fS_*$ is $\Fn$ if $\fS$ is
generated by an analytic isometric $n$-tuple.

\section{Preliminaries}
\label{S:Pre}

In this section, we will give some background which will be used later. Also we take this opportunity to
fix our notation.

\subsection*{Free semigroup algebras}

The material in this subsection is mainly from \cite{Dav2} and the references therein.

A \textit{free semigroup algebra} $\fS$ is the unital $\wot$-closed operator algebra generated by
an isometric tuple.
Let us start with a distinguished representative of the class of those algebras --
the non-commutative analytic Toeplitz algebra $\L_n$.
Let $\Fn$ be the unital free semigroup
with the unit $\mt$ (the empty word), which is generated by the non-commutative symbols $1,...,n$. We form the Fock space
$\H_n:=\Fock$ with the orthonormal basis $\{\xi_w: w\in\Fn\}$. Consider the left regular representation $L$
of $\Fn$: For $i=1,...,n$,
\[
L_i(\xi_w)=\xi_{iw} \quad (w\in \Fn).
\]
It is easy to see that $L=[L_1\,\cdots\,L_n]$ is isometric.
Then the free semigroup algebra generated by $L$ is denoted as $\L_n$,
which is known as the \textit{non-commutative analytic Toeplitz algebra}.
The case of $n=1$ yields the $\wot$-closed algebra generated by the unilateral shift,
and so reproduces the (classical) analytic Toeplitz algebra.

Similarly, one can define the right regular representation $R$ of $\Fn$:  For $i=1,...,n$,
\[
R_i(\xi_w)=\xi_{wi}\quad (w\in \Fn).
\]
Then by $\R_n$ we mean the free semigroup algebra generated by the isometric $n$-tuple $R=[R_1\,\cdots\, R_n]$.
It is easy to see that $\L_n$ and $\R_n$ are unitarily equivalent.
Moreover, it turns out that $\L_n$ and $\R_n$ are the commutants of each other:
$\R_n=\L_n'$ and $\L_n=\R_n'$.

As shown in \cite{DavPit}, every element $A\in\L_n$ is uniquely determined by its ``Fourier expansion''
\[
A\sim\sum_{w\in\Fn}a_wL_w
\]
in the sense that
\[
A\xi_w=\sum_{w\in\Fn}a_w\xi_w\quad  (w\in\Fn),
\]
where the ``$w$-th Fourier coefficient'' is given by $a_w=(A\xi_\mt,\xi_w)$.
It is often useful to heuristically view $A$ as its Fourier expansion.
For $k\ge 1$, the $k$-th Ces\'aro sum of the Fourier series of $A$ is defined by
\[
\Sigma_k(A)=\sum_{|w|<k}(1-\frac{|w|}{k})a_wL_w,
\]
where $|w|$ is the length of the word $w$ in $\Fn$.
The sequence $\{\Sigma_k(A)\}_k$
converges to $A$ in the strong operator topology.

Another important operator algebra naturally associated with the left regular representation $L$ of $\Fn$ is
the \textit{non-commutative disk algebra} $\A_n$, which, by definition, is the unital norm closed algebra generated by $L$.
Clearly, $\A_n$ is weak*-dense in $\L_n$.
The case of $n=1$ yields the (classical) disk algebra.
The algebra $\A_n$ first appeared in \cite{Pop89}, which is related to multivariable non-commutative dilation theory.
It was shown in \cite{Pop96} that the unital norm closed operator algebra generated by an arbitrary isometric $n$-tuple
is completely isometrically isomorphic to $\A_n$.

Two classes of free semigroup algebras are particularly important and they are defined in terms of $\L_n$ and $\A_n$:

\begin{defn} (\cite{DavKatPit, DLP, Ken})
Let $\fS$ be a free semigroup algebra generated by an isometric $n$-tuple  $S$.
\begin{itemize}
\item[(i)] $\fS$ is said to be \textit{analytic} if it is algebraically isomorphic to $\L_n$.

\item[(ii)] $\fS$ is said to be \textit{absolutely continuous}
if the representation of $\A_n$ induced by $S$ can be extended to a weak*-continuous representation of
$\L_n$.
\end{itemize}

Sometimes, we also call the isometric tuple $S$ \textit{analytic} in (i) and \textit{absolute continuous} in (ii), respectively.
\end{defn}

In \cite{DavKatPit}, analytic free semigroup algebras are called \textit{type L}. The notion used here follows \cite{Ken}.
Analyticity plays a fundamental role in the existence of wandering vectors.
In \cite{DavKatPit} it was conjectured  that every
analytic free semigroup algebra has wandering vectors. Recently, this conjecture has been settled in \cite{Ken0}.
It is shown there that a free semigroup algebra either has a wandering vector, or is
a von Neumann algebra.
Motivated by the classical case, the notion of absolute continuity was introduced by Davidson-Li-Pitts in \cite{DLP},
and provided a major device there. By Theorem \ref{T:DKP} below and the definitions, analyticity implies absolute continuity.
It turns out that these two notions coincide in higher-dimensional cases \cite{Ken}.
However, this is not true in the classical case (cf. \cite{Ken, Wer}).

In what follows, we record some results of analytic free semigroup algebras for later reference.
Recall that a weak*-closed operator algebra $\A$ on $\H$ has \textit{property $\bA_1(1)$}, if
given a weak*-continuous linear functional $\tau$ on $\fS$ and $\epsilon>0$, there
are vectors $x,y\in \H$ such that $\|x\| \|y\|\le \|\tau\|+\epsilon$ and $\tau=[xy^*]$.
In particular, the weak* and weak topologies on such an algebra coincide.

\begin{thm} \rm{(\cite{DavKatPit})}
\label{T:DKP}
If $\fS$ is an analytic free semigroup algebra generated by an isometric $n$-tuple $S=[S_1\, \cdots \, S_n]$,
then there is a canonical weak*-homeomorphic and completely isometric isomorphism
$\phi$ from $\fS$ onto $\L_n$, which maps $S_i$ to $L_i$ for $i=1,...,n$.
\end{thm}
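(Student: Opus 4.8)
The plan is to realize $\phi$ as the weak*-continuous extension of the canonical norm-level identification supplied by Popescu, and then to read every asserted property off that extension; so the content of the theorem is that, under analyticity, the norm-closed picture propagates to the $\wot$-closure intact.

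First I would fix the map on generators. Since $S=[S_1\,\cdots\,S_n]$ is isometric, Popescu's theorem \cite{Pop96} gives a completely isometric isomorphism $\rho$ of the non-commutative disk algebra $\A_n$ onto the unital norm-closed algebra $\fA_n$ generated by $S$, with $\rho(L_i)=S_i$. This already dictates that $\phi$ restrict to $\rho^{-1}$ on the $\wot$-dense subalgebra $\fA_n\subseteq\fS$, and in particular that $\phi(S_i)=L_i$. The whole problem is thus to promote $\rho$ to a weak*-homeomorphic complete isometry of the $\wot$-closures; I would construct its inverse $\sigma:=\phi^{-1}\colon\L_n\to\fS$ directly.

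The existence of $\sigma$ is exactly where analyticity is used, and in the present framework it is nearly immediate: by hypothesis $\fS$ is analytic, hence absolutely continuous, so the representation $\rho$ of $\A_n$ induced by $S$ extends to a weak*-continuous unital representation $\sigma\colon\L_n\to\fS$ with $\sigma(L_i)=S_i$. To see $\sigma$ is completely contractive I would use Fej\'er summation: for $A\in\L_n$ the Ces\`aro sums $\Sigma_k(A)$ are polynomials in $L$ converging to $A$ in the strong, hence weak*, operator topology, they satisfy $\norm{\Sigma_k(A)}\le\norm{A}$, and $\sigma(\Sigma_k(A))=\rho(\Sigma_k(A))$ because $\Sigma_k(A)\in\A_n$. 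Weak*-continuity of $\sigma$, the identity $\norm{\rho(\Sigma_k(A))}=\norm{\Sigma_k(A)}$ (as $\rho$ is a complete isometry), and weak*-lower-semicontinuity of the norm then give $\norm{\sigma(A)}\le\norm{A}$, and the same computation at every matrix level gives complete contractivity.

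For the reverse inequality, injectivity, and surjectivity I would produce the companion weak*-continuous completely contractive homomorphism $\tau\colon\fS\to\L_n$ with $\tau(S_i)=L_i$, obtained symmetrically from the fact that $S$ is itself analytic, so that the Ces\`aro sums of elements of $\fS$ converge back to them. Then $\tau\sigma$ and $\sigma\tau$ are weak*-continuous unital endomorphisms fixing the generators, hence fixing the weak*-dense polynomial algebra, hence equal to the identity on $\L_n$ and on $\fS$ respectively; so $\sigma$ is a completely isometric bijection with inverse $\tau$. Finally, a weak*-continuous isometric isomorphism between these dual algebras is automatically a weak*-homeomorphism, being the adjoint of an isometric isomorphism of the preduals; property $\bA_1(1)$, which both $\fS$ and $\L_n$ enjoy, makes this transparent, since on each algebra weak* and weak coincide and weak*-continuity may be tested against the rank-one functionals $[xy^*]$. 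I expect the true obstacle to be the construction of $\tau$ — equivalently the injectivity of $\sigma$ and the lower norm bound — for this is precisely the rigidity that breaks down when $\fS$ carries a von Neumann algebra summand, in which case the Ces\`aro sums of a generator need not recover it; ruling out such a summand is exactly what analyticity provides.
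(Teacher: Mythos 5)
Your proposal has a genuine circularity at its central step. The theorem in question is imported by the paper from \cite[Theorem 1.1]{DavKatPit} without proof, and the implication you invoke as your key move --- ``analytic, hence absolutely continuous'' --- is, in this very paper, \emph{deduced from} Theorem \ref{T:DKP}: both in the introduction and in the preliminaries the author writes that analyticity implies absolute continuity ``by Theorem \ref{T:DKP} and the definitions.'' Analyticity is a purely algebraic hypothesis ($\fS$ is algebraically isomorphic to $\L_n$, by an isomorphism that a priori need not be continuous, need not be canonical, and need not send $S_i$ to $L_i$), whereas absolute continuity asserts the existence of a weak*-continuous extension of the canonical representation $L_i\mapsto S_i$ of $\A_n$. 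Passing from the former to the latter is exactly the hard content of the Davidson--Katsoulis--Pitts theorem; it cannot be a lemma on the way to it. (Historically this is forced: absolute continuity was only introduced in \cite{DLP}, four years after \cite{DavKatPit}, so the original proof necessarily runs on different fuel --- structure-theoretic and automatic-continuity arguments for free semigroup algebras --- not on the shortcut you propose.) So your map $\sigma$ is obtained by assuming the essential point.

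The same problem resurfaces, as you yourself suspect, in the construction of the inverse $\tau\colon\fS\to\L_n$. The phrase ``the Ces\`aro sums of elements of $\fS$ converge back to them'' has no meaning for an abstract analytic $\fS$: Fourier coefficients and Ces\`aro sums are defined through the Fock-space structure of $\L_n$ (the vacuum vector $\xi_\mt$ and the basis $\{\xi_w\}$), and $\fS$ carries no such structure until the isomorphism with $\L_n$ has been established --- which is the theorem. There is no symmetry to exploit between the concrete algebra $\L_n$ and the abstract algebra $\fS$. A genuine route to $\tau$ would be compression to the cyclic subspace of a wandering vector for $S$, but the existence of wandering vectors for analytic algebras is Kennedy's theorem \cite{Ken0}, a deep and much later result, not a consequence of the definitions. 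The parts of your argument that are sound --- Popescu's identification $\rho$ at the norm-closed level, the Fej\'er-type estimate showing that a weak*-continuous extension is completely contractive, the observation that a weak*-continuous isometric bijection of dual spaces is automatically a weak*-homeomorphism, and the density argument identifying $\sigma\tau$ and $\tau\sigma$ with the identity --- are the routine bookkeeping surrounding the two steps you did not supply.
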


\begin{thm}
\label{T:ww} \rm{(\cite{Ber1} for $n=1$ and \cite{Ken} for $n\ge 2$)}
Any analytic free semigroup algebra $\fS$ has property $\bA_1(1)$.
\end{thm}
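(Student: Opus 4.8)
The plan is to disentangle the two assertions packaged into property $\bA_1(1)$: (a) every weak*-continuous functional $\tau$ on $\fS$ is \emph{exactly} a rank-one functional $[xy^*]$ with $x,y\in\H$, and (b) the vectors can be chosen so that $\|x\|\,\|y\|\le\|\tau\|+\epsilon$. A tempting shortcut is to transport everything through the isomorphism $\phi\colon\fS\to\L_n$ of Theorem~\ref{T:DKP}; but this fails, because $\phi$ is only an abstract (weak*-homeomorphic, completely isometric) isomorphism, whereas the functionals $[xy^*]$ are tied to the \emph{concrete} action of $\fS$ on $\H$. Composing a rank-one functional on $\L_n\subseteq\B(\Fock)$ with $\phi$ produces a functional on $\fS$ that need not have the form $A\mapsto(Ax,y)_\H$. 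Hence one must argue on $\H$ itself, using $\phi$ only to import quantitative information from the model.

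For the model algebra $\L_n$ on $\Fock$ the existence statement (a) is already in hand: by \cite{DavPit99, DavWri} the unique predual is ${\L_n}_*=\{[\xi\eta^*]:\xi,\eta\in\Fock\}$, so every weak*-continuous functional is a single rank-one functional in Fock-space vectors. Thus for $\L_n$ the entire content of $\bA_1(1)$ is the sharp norm bound (b), which I would take from the Davidson--Pitts analysis of $({\L_n})_*$ (the inner--outer/Nevanlinna--Pick factorization machinery). This gives the base case $\bA_1(1)$ for $\L_n$ and, more importantly, a usable quantitative input.

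The crux is to move from the model to a general analytic $\fS$ acting on $\H$. First I would use analyticity $\Rightarrow$ absolute continuity to realize the identity representation of $\fS$ as a weak*-continuous representation of $\L_n$ sending $L_i\mapsto S_i$, and the dichotomy of \cite{Ken0} (an analytic algebra is not a von Neumann algebra) to produce wandering vectors $\xi$: each gives an isometry $\Fock\to\H$, $\xi_w\mapsto S_w\xi$, intertwining $L$ with $S$ and hence furnishing concrete copies of the model inside $\H$. To reach an arbitrary $\tau$, I would invoke the Lebesgue--von Neumann--Wold decomposition of \cite{Ken} to split $S$ into its absolutely continuous (shift-like) and singular parts, show that weak*-continuous functionals are carried by the absolutely continuous part where wandering vectors are abundant, and transport the Fock-space representative of $\tau\circ\phi^{-1}$ across an intertwiner to obtain a \emph{concrete} representation $\tau=[x_0y_0^*]$ with $x_0,y_0\in\H$ and $\|x_0\|\,\|y_0\|\le C\|\tau\|$ for a fixed constant $C$.

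Finally I would sharpen the constant from $C$ to $1$ by a Bercovici--Foias--Pearcy/Scott Brown successive-approximation scheme: starting from $[x_0y_0^*]$, repeatedly solve for the residual $\tau-[x_ky_k^*]$ with the same quantitative control, arrange the corrections to have geometrically decaying norms, and pass to the limit to produce $x,y\in\H$ with $\tau=[xy^*]$ and $\|x\|\,\|y\|\le\|\tau\|+\epsilon$. The main obstacle is precisely the interface between the abstract and concrete pictures in the presence of a nontrivial Cuntz (von Neumann) summand of $S$---an analytic algebra need not be a pure shift---together with achieving the \emph{exact} distortion constant $1$; these are exactly the points requiring the deep structural input, and the reason the two regimes are settled separately, by Bercovici for $n=1$ and by Kennedy for $n\ge2$.
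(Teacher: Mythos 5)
First, a point of comparison: the paper does not actually prove this theorem. It is recorded in the preliminaries as an imported result, with the proof deferred to \cite{Ber1} for $n=1$ and \cite{Ken} for $n\ge 2$; the paper only uses its conclusions later (e.g., that the weak* and weak operator topologies coincide on $\fS$, in the proof of Theorem \ref{T:coalg}, and the factorization itself in Theorem \ref{T:corep}). So your sketch must stand on its own, and it does not. Your opening observation is correct and is indeed the crux: property $\bA_1(1)$ is a spatial property, and the isomorphism $\phi$ of Theorem \ref{T:DKP} only yields $\tau=[\xi\eta^*]\circ\phi$ with $\xi,\eta\in\Fock$, which is not of the form $[xy^*]$ with $x,y\in\H$. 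But the two steps you propose to bridge this gap are exactly where the content of the theorem lies, and both are asserted rather than proved. (i) Transporting the Fock-space representative ``across an intertwiner'' only produces functionals of the form $[W\xi(W\eta)^*]$, i.e., functionals living on a single cyclic copy $W(\Fock)\subseteq\H$ generated by one wandering vector; a general weak*-continuous $\tau$ on $\fS$ has no reason to be of this form, and passing from ``wandering subspaces jointly span $\H$'' to a factorization of an arbitrary $\tau$ with norm control is precisely the hard analytic work in \cite{Ken}. (ii) The final upgrade from a constant $C$ to the sharp constant $1$ via a ``Scott Brown successive-approximation scheme'' presupposes that the residual functionals $\tau-[x_ky_k^*]$ can be solved with uniform quantitative control (something like property $X_{0,1}$, or uniform approximate factorization); nothing earlier in your sketch establishes this solvability, and it is the entire content of the theorem, not a routine limiting argument.

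There is also a smaller but genuine confusion: for an analytic $\fS$ there is no singular part to discard, since analyticity implies absolute continuity by definition (and for $n\ge 2$ the two notions are equivalent by \cite{Ken}), so the Lebesgue--von Neumann--Wold step as you describe it is vacuous. The real obstruction, which you only mention in passing at the end, is that an analytic tuple need not be \emph{pure}: it can have a Cuntz summand with $\sum_i S_iS_i^*=I$, so $\H$ is not an orthogonal direct sum of copies of $\Fock$, and the wandering subspaces span $\H$ only in a non-orthogonal, infinitary way. In short, your proposal is a reasonable roadmap naming the same ingredients the cited literature uses (absolute continuity, wandering vectors from \cite{Ken0}, dual-algebra factorization technique), but every quantitatively hard step is delegated to machinery that is invoked rather than executed, so as a proof it has genuine gaps.
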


\subsection*{Dual algebras and operator spaces}

The main sources of this subsection are  \cite{ER, ER1, Ruan}.

In this paper,  a \textit{dual algebra} is a \textit{unital} weak*-closed subalgebra of $\B(\H)$ for some Hilbert space $\H$.
Usually, this is called a (unital) \textit{concrete dual algebra}.
There is a characterization for abstract dual algebras, and
it turns out that every abstract dual algebra has a concrete realization.
We will not need those facts in this paper.
Observe that free semigroup algebras are dual algebras.

Let $\A\subseteq\B(\H)$ be a dual algebra. Then $\A$ has the standard predual $\B(\H)_*/{\A_\perp}$, where,
as usual, $\B(\H)_*$ is the space of all bounded weak*-continuous
linear functionals on $\B(\H)$,  and $\A_\perp$ is the preannihilator of $\A$ in $\B(\H)_*$.
Of course, $\A$ may have more than one predual. We will use the notation ${\A_\H}_*$, or simply $\A_*$ if the context is clear, to denote
its standard predual.  This will not cause any confusion in the context of
free semigroup algebras because of the uniqueness of their preduals (cf. \cite{DavWri}).

It is very useful to notice that dual algebras are intimately connected with the theory of operator spaces.
Let $V$ be an operator space. Then its dual $V^*$ is
also an operator space, which is called the \textit{operator space dual} of $V$.
An operator space $W$ is said to be a \textit{dual operator space} if $W$ is completely isometrically isomorphic to $V^*$ for some operator space $V$. In this case, we also say that $V$ is an \textit{operator predual} of $W$.
The normal spatial tensor product of dual operator spaces (resp. algebras) is again a dual operator space (resp. algebra).
Any dual algebra $\A$ is a dual operator space.
Furthermore, $\A_*$ inherits a natural operator space structure
from $\A^*$, and so $\A_*$ itself is an operator space.
In fact, this paper heavily relies on the operator space structure of $\A_*$.

Given dual operator spaces $V^*$ and $W^*$ in $\B(\H)$ and $\B(\K)$, respectively,
the \textit{normal Fubini tensor product} $V^*\ol\otimes_\F W^*$ is defined by
\begin{align*}
V^*\ol\otimes_\F W^*=\{A\in\B(\H\otimes \K): \ & (\omega_1\otimes \id)(A)\in W^*, (\id\otimes \omega_2)(A)\in V^*\\
 &\mbox{for all}\ \omega_1\in \B(\H)_*, \ \omega_2\in\B(\K)_*\},
\end{align*}
where $\omega_1\otimes \id$ and $\id\otimes \omega_2$ are the right and left slice mappings determined
by $\omega_1$ and $\omega_2$, respectively.
Let $V^*\ol\otimes W^*$ and $V\widehat\otimes W$ stand for the normal spatial tensor product of $V^*$ and $W^*$,
and the operator projective tensor product of $V$ and $W$, respectively.
Since $\omega_1\otimes \id$ and $\id\otimes \omega_2$
are weak*-continuous, we have $V^*\ol\otimes W^*\subseteq V^*\ol\otimes_\F W^*$.
The following result characterizes when they are equal.

\begin{thm}
\label{T:ERR}
 \rm{(\cite{ER1, Ruan})}
(i) We have the following weak*-homeomorphic completely isometric isomorphism:
\[
V^*\ol\otimes_\F W^*\cong(V\widehat\otimes W)^*.
\]

(ii) $V^*\ol\otimes_\F W^* = V^*\ol\otimes W^*$ if and only if $(V^*\ol\otimes W^*)_*\cong V\widehat\otimes W$.
\end{thm}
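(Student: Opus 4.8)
The plan is to prove (i) first and then obtain (ii) as a duality corollary. For (i), the idea is to realize the dual $(V\widehat\otimes W)^*$ explicitly as a weak*-closed subspace of $\B(\H\otimes\K)$ and then recognize that subspace, via slice maps, as exactly the normal Fubini tensor product. The starting point is the baseline identification $\B(\H)_*\widehat\otimes\B(\K)_*\cong\B(\H\otimes\K)_*$ (the operator projective tensor product of the trace-class preduals is the trace class of the tensor product), whose dual gives $(\B(\H)_*\widehat\otimes\B(\K)_*)^*\cong\B(\H\otimes\K)$ under the pairing $\langle A,\omega_1\otimes\omega_2\rangle=(\omega_1\otimes\omega_2)(A)$. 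Since $V^*$ and $W^*$ are weak*-closed in $\B(\H)$ and $\B(\K)$, their preduals are the operator space quotients $V\cong\B(\H)_*/(V^*)_\perp$ and $W\cong\B(\K)_*/(W^*)_\perp$, so the canonical maps $\B(\H)_*\to V$ and $\B(\K)_*\to W$ are complete quotient maps.

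By the projectivity (functoriality) of the operator projective tensor product, which carries complete quotient maps to complete quotient maps, I would obtain a complete quotient map
\[
q:\B(\H)_*\widehat\otimes\B(\K)_*\longrightarrow V\widehat\otimes W,
\]
whose kernel is the closed span of $(V^*)_\perp\otimes\B(\K)_*$ together with $\B(\H)_*\otimes(W^*)_\perp$. Dualizing $q$ produces a weak*-homeomorphic complete isometry of $(V\widehat\otimes W)^*$ onto the annihilator $(\ker q)^\perp$ inside $\B(\H\otimes\K)$. The crux is then to identify this annihilator through slice maps: $A$ annihilates $(V^*)_\perp\otimes\B(\K)_*$ precisely when $(\id\otimes\omega_2)(A)\in V^*$ for every $\omega_2\in\B(\K)_*$, because $(\omega_1\otimes\omega_2)(A)=\langle(\id\otimes\omega_2)(A),\omega_1\rangle$ and $V^*=((V^*)_\perp)^\perp$ by the bipolar theorem; symmetrically, $A$ annihilates $\B(\H)_*\otimes(W^*)_\perp$ exactly when $(\omega_1\otimes\id)(A)\in W^*$ for every $\omega_1$. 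These are precisely the two defining conditions of $V^*\ol\otimes_\F W^*$, which yields (i). I expect the slice-map bookkeeping — keeping the weak* topologies matched and checking that the adjoint of $q$ lands weak*-homeomorphically onto the Fubini product rather than merely isometrically — to be the main technical obstacle, though each individual step is standard operator-space analysis.

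For (ii), the point is that the containment $V^*\ol\otimes W^*\subseteq V^*\ol\otimes_\F W^*$ noted before the statement is a weak*-continuous complete isometry onto a weak*-closed subspace, since $V^*\ol\otimes W^*$ is by definition the weak*-closure of $V^*\otimes W^*$ in $\B(\H\otimes\K)$. The predual of this inclusion is a canonical complete quotient map
\[
q_0:\ V\widehat\otimes W\ \cong\ (V^*\ol\otimes_\F W^*)_*\longrightarrow(V^*\ol\otimes W^*)_*,
\]
where the left identification is part (i). Now a weak*-continuous complete isometry between dual operator spaces is surjective — hence an equality of subspaces — if and only if its predual complete quotient map is injective, equivalently a completely isometric isomorphism. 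Thus $V^*\ol\otimes_\F W^*=V^*\ol\otimes W^*$ holds exactly when $q_0$ is a completely isometric isomorphism, which is precisely the assertion $(V^*\ol\otimes W^*)_*\cong V\widehat\otimes W$ realized through the canonical map $q_0$. This reduces (ii) to the elementary duality principle that the adjoint of a complete quotient map is a complete isometry whose range is all of the target exactly when the quotient map is injective, so no further hard analysis is needed once (i) is established.
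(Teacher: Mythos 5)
The paper itself offers no proof of this theorem: it is quoted as known background, with the argument deferred to the cited references \cite{ER1, Ruan}, so there is no internal proof to compare yours against. Your proposal is correct and is essentially the standard argument from those sources --- dualize the complete quotient map $\B(\H)_*\widehat\otimes\B(\K)_*\to V\widehat\otimes W$ onto the annihilator of its kernel, identify that annihilator with $V^*\ol\otimes_\F W^*$ via slice maps and the bipolar theorem, and obtain (ii) from the duality between injectivity of the canonical predual quotient and surjectivity of the weak*-continuous inclusion $V^*\ol\otimes W^*\subseteq V^*\ol\otimes_\F W^*$ --- and your explicit insistence that the isomorphism in (ii) be realized by the canonical map is precisely the reading under which the statement is true and is used later in the paper.
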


\smallskip
\subsection*{Notation}
For a Hilbert space $\H$, we use $\H^k$ and $\H^{\otimes k}$ to denote the direct sum of $k$ copies of $\H$ and
 the tensor product of $k$ copies of $\H$, respectively. For $A\in\B(\H)$, by $A^{(k)}\in\B(\H^{k})$ and $A^{\otimes k}\in\B(\H^{\otimes k})$,
 we mean the direct sum of $k$ copies of $A$ acting on $\H^k$  and the tensor product of $k$ copies of $A$ acting on $\H^{\otimes k}$, respectively.

Let $S=[S_1\,\cdots\,S_n]$ be an arbitrary isometric $n$-tuple acting on $\H$.
We think of $S$ either as $n$ isometries acting on the common Hilbert space $\H$ or as an isometry from $\H^n$ to $\H$.
Also, for  $k\ge 1$, we set
 $S^{\otimes k}= [S_1^{\otimes k}\,\cdots \, S_n^{\otimes k}]$.

Let $\A$ and $\B$ be two dual algebras. We use
$\A\ol\otimes\B$ to denote the normal spatial tensor product of $\A$ and $\B$.

\section{$\L_n$ revisited}
\label{S:Ln}

In this section, we will take closer look at the non-commutative analytic Toeplitz algebras, and
prove some results which are vital in constructing Hopf algebra structures of an analytic free semigroup algebra
and its predual.

By $\L_m\ol\otimes^{\rm w} \L_n$, we mean the
$\wot$-closed algebra generated by spatial tensor product of $\L_m$ and $\L_n$.
The notation $\R_m\ol\otimes^{\rm w} \R_n$ is defined similarly.
It is easy to see that $\R_m\ol\otimes^{\rm{w}} \R_n\subseteq (\L_m\ol\otimes^{\rm{w}} \L_n)'$.
Then, by \cite[Theorem 4.3]{Berc}, $\L_m\ol\otimes^{\rm{w}} \L_n$
has property $\bA_1(1)$ if $m>1$ or $n>1$, so that the weak* and $\wot$ topologies on it coincide.
This is also the case if $m=n=1$ (cf., e.g., \cite{BerWes}).
Thus $\L_m\ol\otimes^{\rm{w}} \L_n=\L_m\ol\otimes\L_n$ for all $m,n\in\bN$. By \cite{KriPow},
$\L_m\ol\otimes\L_n$ can be identified with the higher rank analytic Toeplitz operator algebra $\L_\Lambda$
associated with a simple rank 2 graph $\Lambda$.
Using a general result on rank 2 graphs there, one has the following result (refer to \cite[Section 1 and Section 3]{KriPow}).

\begin{lem}
\label{L:com}
$
(\L_m\ol\otimes \L_n)'=\R_m\ol\otimes \R_n \quad\mbox{and}\quad (\L_m\ol\otimes \L_n)''=\L_m\ol\otimes \L_n.
$
\end{lem}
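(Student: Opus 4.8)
The plan is to prove both equalities by combining the general commutant theory for the analytic Toeplitz algebras with the identification $\L_m\ol\otimes^{\rm w}\L_n=\L_m\ol\otimes\L_n$ already established in the excerpt. First I would recall the two facts available to me: the normal spatial tensor product $\L_m\ol\otimes\L_n$ coincides with the $\wot$-closed algebra $\L_m\ol\otimes^{\rm w}\L_n$ generated by the algebraic tensor product (since property $\bA_1(1)$ forces the weak$^*$ and $\wot$ topologies to agree), and the classical commutant relations $\L_k'=\R_k$, $\R_k'=\L_k$ for each $k$. The strategy is then to invoke the Kribs--Power identification of $\L_m\ol\otimes\L_n$ with the higher rank graph algebra $\L_\Lambda$ for a simple rank-$2$ graph $\Lambda$, for which the commutant is computed to be the corresponding right algebra $\R_\Lambda=\R_m\ol\otimes\R_n$; this is exactly what the citation to \cite[Section 1 and Section 3]{KriPow} is meant to supply.

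For the first equality, the inclusion $\R_m\ol\otimes\R_n\subseteq(\L_m\ol\otimes\L_n)'$ is already noted in the text, coming from $\R_k\subseteq\L_k'$ on each factor together with the fact that operators on the first tensor leg commute with operators on the second. The content is the reverse inclusion $(\L_m\ol\otimes\L_n)'\subseteq\R_m\ol\otimes\R_n$. Here I would either quote the Kribs--Power commutant computation directly, or argue via slice maps: given $X\in(\L_m\ol\otimes\L_n)'$, apply normal slice mappings $(\omega\otimes\id)$ and $(\id\otimes\omega)$ to show that the partial evaluations land in the commutants $\L_n'=\R_n$ and $\L_m'=\R_m$ respectively, using weak$^*$-continuity of the slice maps and the generation of the tensor algebra by elementary tensors. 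Combined with a Fubini-type argument (Theorem \ref{T:ERR}) this identifies $X$ as an element of $\R_m\ol\otimes\R_n$.

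Once the first equality is in hand, the double-commutant equality is essentially formal. The plan is to apply the same commutant formula a second time: by the first equality applied with the roles of $\L$ and $\R$ interchanged (equivalently, by the symmetric statement $(\R_m\ol\otimes\R_n)'=\L_m\ol\otimes\L_n$, which holds by the identical argument since $\R_k'=\L_k$), we obtain
\[
(\L_m\ol\otimes\L_n)''=\big((\L_m\ol\otimes\L_n)'\big)'=(\R_m\ol\otimes\R_n)'=\L_m\ol\otimes\L_n.
\]
Alternatively this follows immediately from von Neumann's double commutant philosophy once one knows $\L_m\ol\otimes\L_n$ is a $\wot$-closed unital algebra that equals its own bicommutant, which the explicit computation guarantees. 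I expect the main obstacle to be the reverse inclusion in the first equality: verifying carefully that the slice-map arguments apply in this non-selfadjoint, non-normal setting and that the Kribs--Power rank-$2$ graph framework genuinely yields $\R_\Lambda=\R_m\ol\otimes\R_n$ as the full commutant rather than merely a subalgebra of it. The rest reduces to bookkeeping with the commutant relations and the already-established coincidence of the spatial and $\wot$-generated tensor products.
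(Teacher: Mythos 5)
Your primary route---quoting the Kribs--Power identification of $\L_m\ol\otimes\L_n$ with the rank-$2$ graph algebra $\L_\Lambda$ and the commutant computation there---is exactly what the paper does: the paper offers no argument beyond that citation. Your derivation of the bicommutant identity from the first identity by exchanging the roles of $\L$ and $\R$ (using $\R_k'=\L_k$, or the unitary equivalence of the left and right regular pictures) is a correct way to make the second equality formal.

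However, your proposed fallback via slice maps does not work as stated, and the reason is instructive. Slicing an element $X\in(\L_m\ol\otimes\L_n)'$ does show that $(\omega\otimes\id)(X)\in\L_n'=\R_n$ and $(\id\otimes\omega)(X)\in\L_m'=\R_m$, but this only places $X$ in the normal Fubini tensor product $\R_m\ol\otimes_\F\R_n$. To pass from the Fubini product to the spatial product $\R_m\ol\otimes\R_n$ one needs, by Theorem \ref{T:ERR}(ii), the identification $(\R_m\ol\otimes\R_n)_*\cong{\R_m}_*\widehat\otimes{\R_n}_*$---and in this paper that identification (Proposition \ref{P:pretenpro}, stated for the unitarily equivalent $\L$'s) is itself proved \emph{using} Lemma \ref{L:com}. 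For non-selfadjoint dual algebras the Fubini and spatial tensor products need not coincide in general, so this step is not free, and within the paper's logical structure your alternative argument is circular. The citation to Kribs--Power must carry the full weight of the reverse inclusion $(\L_m\ol\otimes\L_n)'\subseteq\R_m\ol\otimes\R_n$.
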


In what follows, we identify the standard predual of  $\L_m\ol\otimes \L_n$.

\begin{prop}
\label{P:pretenpro}
We have the following completely isometric isomorphism
\begin{align}
\label{E:lmln}
(\L_m\ol\otimes \L_n)_* & \cong {\L_m}_*\widehat\otimes {\L_n}_*.
\end{align}
\end{prop}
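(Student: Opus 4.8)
The plan is to transfer the problem to an identity between the algebras themselves via Theorem~\ref{T:ERR}(ii), and then to read off that identity from the commutant description in Lemma~\ref{L:com}. Put $V={\L_m}_*$ and $W={\L_n}_*$, so that $V^*=\L_m$ and $W^*=\L_n$ completely isometrically, the operator space structure on each predual being the one inherited from the dual. With these identifications $V^*\ol\otimes W^*=\L_m\ol\otimes\L_n$ and $V\widehat\otimes W={\L_m}_*\widehat\otimes{\L_n}_*$, so Theorem~\ref{T:ERR}(ii) tells us that the desired completely isometric isomorphism \eqref{E:lmln} holds precisely when the normal Fubini and normal spatial products agree:
\[
\L_m\ol\otimes_\F\L_n=\L_m\ol\otimes\L_n.
\]
Thus I would first reduce everything to this single equality. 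The inclusion $\L_m\ol\otimes\L_n\subseteq\L_m\ol\otimes_\F\L_n$ is automatic from weak*-continuity of the slice maps (as already observed before Theorem~\ref{T:ERR}), so only the reverse inclusion needs an argument.

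For the reverse inclusion I would show, by a slice-map computation, that
\[
\L_m\ol\otimes_\F\L_n\subseteq(\R_m\ol\otimes\R_n)'.
\]
Regard $\L_m\subseteq\B(\H)$ and $\L_n\subseteq\B(\K)$, so that the product acts on $\H\otimes\K$. If $A\in\L_m\ol\otimes_\F\L_n$, then $(\id\otimes\om)(A)\in\L_m$ for every $\om\in\B(\K)_*$, and hence commutes with each $x'\in\L_m'=\R_m$. Using $(\id\otimes\om)(A\,(x'\otimes I))=(\id\otimes\om)(A)\,x'$ and the corresponding identity with the factors reversed, the relation $(\id\otimes\om)(A)\,x'=x'\,(\id\otimes\om)(A)$ for all $\om$ says that every left slice of $A(x'\otimes I)-(x'\otimes I)A$ vanishes; since the left slice maps separate the points of $\B(\H\otimes\K)$, this forces $A(x'\otimes I)=(x'\otimes I)A$. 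Symmetrically, $A(I\otimes y')=(I\otimes y')A$ for every $y'\in\L_n'=\R_n$. As $\{x'\otimes I:x'\in\R_m\}\cup\{I\otimes y':y'\in\R_n\}$ generates $\R_m\ol\otimes\R_n$ as a weak*-closed algebra, $A$ commutes with all of $\R_m\ol\otimes\R_n$, i.e. $A\in(\R_m\ol\otimes\R_n)'$.

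Finally I would invoke Lemma~\ref{L:com}. It gives $\L_m'=\R_m$, $\L_n'=\R_n$ and, in its second identity, $(\R_m\ol\otimes\R_n)'=(\L_m\ol\otimes\L_n)''=\L_m\ol\otimes\L_n$. Combining with the previous paragraph yields $\L_m\ol\otimes_\F\L_n\subseteq\L_m\ol\otimes\L_n$, hence equality, and the proposition follows from Theorem~\ref{T:ERR}(ii). The main obstacle is the slice-map step: one must justify that a bounded operator all of whose left (respectively right) slices vanish is itself zero, and that commutation with the elementary generators $x'\otimes I$ and $I\otimes y'$ propagates to the whole weak*-closed tensor product $\R_m\ol\otimes\R_n$. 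Everything else is bookkeeping, and the essential structural input---the precise commutants of $\L_m\ol\otimes\L_n$---is exactly what Lemma~\ref{L:com} supplies.
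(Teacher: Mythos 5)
Your proposal is correct and follows essentially the same route as the paper's own proof: reduce via Theorem~\ref{T:ERR}(ii) to the equality $\L_m\ol\otimes_\F\L_n=\L_m\ol\otimes\L_n$, prove the nontrivial inclusion by showing each element of the Fubini product commutes with the generators $x'\otimes I$ and $I\otimes y'$ of $\R_m\ol\otimes\R_n$ through a slice-map computation, and finish with the double-commutant identity of Lemma~\ref{L:com}. The only cosmetic difference is that the paper phrases the separation-of-points step via the complete isometry $\B(\H_m\otimes\H_n)\cong\C\B(\B(\H_m)_*,\B(\H_n))$, whereas you invoke directly that slice maps separate points, which is the same fact.
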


\begin{proof}
Although \cite[Theorem 7.2.4]{ER} handles the selfadjoint dual algebras (i.e., von Neumann algebras),
the proof of our proposition follows the same line there. For self-containedness, it is also included here.

Clearly,
it suffices to show that  $\L_m\ol\otimes_\F \L_n\subseteq \L_m \ol\otimes \L_n$.
By Lemma \ref{L:com}, we just need to check that each $T\in \L_m\ol\otimes_\F \L_n$ commutes with all operators in the
commutant $(\L_m \ol\otimes \L_n)'=\R_m\ol\otimes \R_n$.
Thus it is sufficient to show that
\[
T(I\otimes B)=(I\otimes B)T \quad (B\in\R_n)
\]
and
\[
T(A\otimes I)=(A\otimes I)T\quad (A\in\R_m).
\]

For the first identity, notice the following identification (\cite{ER})
\begin{align*}
\pi: \B(\H_m \otimes \H_n)&\cong \C\B(\B(\H_m)_*, \B(\H_n))\\
X&\mapsto \pi(X): \omega_1\mapsto (\omega_1\otimes \id)(X).
\end{align*}
So it suffices to show that
\[
(\omega_1\otimes \id)(T(I\otimes B))=(\omega_1\otimes \id)((I\otimes B)T)
\]
for all $w_1\in {\B(\H_m)}_*$, namely,
\[
(\omega_1\otimes \id)(T)B=B(\omega_1\otimes \id)(T).
\]
The above identity holds true since
$T\in \L_m\ol\otimes_\F \L_n$ implies that $(\omega_1\otimes \id)(T)\in \L_n$.

The second identity can be proved similarly.
\end{proof}

Before giving our next result, let us recall that, for $1\le k \le \infty$, an isometric $n$-tuple $S$
is said to be \textit{pure of multiplicity $k$} if $S$ is unitarily equivalent to $L^{(k)}$.
In particular, any pure isometric tuple is analytic.
Also, a subspace $\W$ is \textit{wandering for an isometric $n$-tuple $S$} if the subspaces $\{S_u\W : u\in \Fn\}$ are pairwise orthogonal.

\begin{prop}
\label{P:LiLi}
Let $L$ be the left regular representation of $\Fn$. Then
for any $k>1$, $L^{\otimes k}$ is a pure isometric $n$-tuple with multiplicity $\infty$.
\end{prop}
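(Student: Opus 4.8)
The plan is to construct an explicit Beurling--Wold-type decomposition of the ambient space $\H_n^{\otimes k}$ adapted to the row isometry $L^{\otimes k}$, exhibiting it as an orthogonal direct sum of shifted copies of a single wandering subspace. First I would record that $L^{\otimes k}$ is genuinely a row isometry: since
\[
(L_i^{\otimes k})^*L_j^{\otimes k}=(L_i^*L_j)^{\otimes k}=(\delta_{i,j}I)^{\otimes k}=\delta_{i,j}I,
\]
the tuple $L^{\otimes k}=[L_1^{\otimes k}\,\cdots\,L_n^{\otimes k}]$ is isometric on $\H_n^{\otimes k}$. By the definition of \emph{pure of multiplicity $\infty$} (unitary equivalence to $L^{(\infty)}$), it then suffices to identify the wandering subspace $\W:=\H_n^{\otimes k}\ominus\sum_{i=1}^n L_i^{\otimes k}\H_n^{\otimes k}$, to show it is infinite dimensional, and to prove that the subspaces $\{L_w^{\otimes k}\W:w\in\Fn\}$ are pairwise orthogonal with dense span.

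The key computation is a description of $\W$ in terms of the orthonormal basis $\{\xi_{w_1}\otimes\cdots\otimes\xi_{w_k}:w_1,\dots,w_k\in\Fn\}$. Since $L_i^{\otimes k}$ sends this basis to $\{\xi_{iw_1}\otimes\cdots\otimes\xi_{iw_k}\}$, the range $\sum_i L_i^{\otimes k}\H_n^{\otimes k}$ is the closed span of those basis vectors all of whose coordinates $w_1,\dots,w_k$ begin with one common symbol; hence $\W$ is the closed span of the remaining basis vectors, namely those whose coordinates do not share a common first letter (in particular, every vector with at least one empty coordinate). As there are already infinitely many such vectors of the form $\xi_\mt\otimes\xi_{w}\otimes\xi_\mt\otimes\cdots\otimes\xi_\mt$ with $w\in\Fn$, the subspace $\W$ is separable and infinite dimensional, so the multiplicity will be $\infty$.

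To conclude I would show $\H_n^{\otimes k}=\bigoplus_{w\in\Fn}L_w^{\otimes k}\W$ with mutually orthogonal summands, which simultaneously rules out a Cuntz (fully coisometric) summand and yields the unitary equivalence $L^{\otimes k}\cong L^{(\dim\W)}$. For spanning, given a basis vector $\eta=\xi_{w_1}\otimes\cdots\otimes\xi_{w_k}$, let $p$ be the longest common prefix of $w_1,\dots,w_k$ and write $w_j=pw_j'$; then $\eta=L_p^{\otimes k}(\xi_{w_1'}\otimes\cdots\otimes\xi_{w_k'})$, and by maximality of $p$ the words $w_1',\dots,w_k'$ have no common first letter, so the second factor lies in $\W$ and $\eta\in L_p^{\otimes k}\W$. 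For orthogonality, if $u\ne\mt$ then $L_u^{\otimes k}\W\subseteq\sum_i L_i^{\otimes k}\H_n^{\otimes k}=\W^\perp$, whence $\W\perp L_u^{\otimes k}\W$; combined with the identity $(L_w^{\otimes k})^*L_{w'}^{\otimes k}=(L_w^*L_{w'})^{\otimes k}=0$ whenever neither of $w,w'$ is a prefix of the other, and with the fact that $L_w^{\otimes k}$ is isometric, this gives $L_w^{\otimes k}\W\perp L_{w'}^{\otimes k}\W$ for all $w\ne w'$. Finally, choosing a unitary from $\W$ onto the wandering subspace of $L^{(\infty)}$ (both being separable and infinite dimensional) and propagating it by the isometries $L_w^{\otimes k}$ produces the desired unitary equivalence $L^{\otimes k}\cong L^{(\infty)}$, which in particular shows $L^{\otimes k}$ is pure and analytic.

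The only delicate point is the bookkeeping in the last step: checking that the longest-common-prefix splitting really lands the residual factor inside $\W$, and that the shifted copies are genuinely orthogonal in the two prefix cases above. Both reduce to the single elementary fact that $L_i^*L_j=\delta_{i,j}I$ propagates through tensor powers as $(L_i^{\otimes k})^*L_j^{\otimes k}=\delta_{i,j}I$; once this is in hand, the decomposition $\H_n^{\otimes k}=\bigoplus_{w}L_w^{\otimes k}\W$ is exactly the assertion that $L^{\otimes k}$ is a pure shift of multiplicity $\dim\W=\infty$. I expect no analytic subtlety, only careful indexing of words.
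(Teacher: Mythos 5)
Your proof is correct and takes essentially the same route as the paper: the same subspace (the closed span of basis vectors $\xi_{u_1}\otimes\cdots\otimes\xi_{u_k}$ whose coordinates share no common first letter) serves as the wandering subspace, and the same longest-common-prefix argument yields the decomposition $\H_n^{\otimes k}=\bigoplus_{w\in\Fn}L_w^{\otimes k}\W$ with $\dim\W=\infty$. The only difference is one of self-containedness: you identify $\W$ as the orthocomplement of the range of the row isometry, check the wandering orthogonality explicitly, and construct the unitary equivalence with $L^{(\infty)}$ by hand, whereas the paper asserts the wandering property directly and invokes Popescu's Wold decomposition to conclude.
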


\begin{proof}\footnote{I am indebted to Adam Fuller for showing me this proof. }
Obviously, $L^{\otimes k}$ is an isometric $n$-tuple.
In what follows, we prove that $L^{\otimes k}$ is pure of multiplicity $\infty$.
Set
\[
\K=\ol\spn\{\xi_{u_1}\otimes\cdots\otimes \xi_{u_k}: u_1,...,u_n \mbox{ have no common prefix}\}.
\]
That is, $\K$ is spanned by
the basis vectors $\xi_{u_1}\otimes\cdots\otimes \xi_{u_k}$, where
$u_i=\mt$ for \textit{some} $i\in\{1,...,k\}$,
or $u_i=k_iu_i'$ with $k_i\ne k_j$ for \textit{some} $i\ne j$.

It is easy to check that $\K$ is wandering for $L^{\otimes k}$. Suppose now $\xi_{u_1}\otimes\cdots\otimes \xi_{u_k}$
is an arbitrary basis vector for $\H_n^{\otimes k}$ with $\xi_{u_1}\otimes\cdots\otimes \xi_{u_k}\not\in \K$.
One can choose a word $w\in \Fn$ of maximal
length such that $u_i=wu_i'$ for some $u_i'\in\Fn$ $(i=1,...,k)$. Notice that $w\neq \varnothing$ as $\xi_{u_1}\otimes\cdots\otimes \xi_{u_k}\not\in \K$.
On the other hand, since $w$ is maximal, it follows that $\xi_{u_1}'\otimes\cdots\otimes \xi_{u_k}'\in \K$. Clearly,
\[
\xi_{u_1}\otimes\cdots\otimes \xi_{u_k}=L_w^{\otimes k}(\xi_{u_1}'\otimes\cdots\otimes \xi_{u_k}').
\]
Thus, for any basis vector $\xi_{u_1}\otimes\cdots\otimes \xi_{u_k}\in \H_n^{\otimes k}$,
we have
\[
\xi_{u_1}\otimes\cdots\otimes \xi_{u_k}\in\bigoplus_{w\in\Fn}L_w^{\otimes k}\, \K,
\]
and so
\[
\H_n^{\otimes k}=\bigoplus_{w\in\Fn} L_w^{\otimes k}\,\K.
\]

Obviously, $\dim\K=\infty$ as $k>1$. Therefore, it follows from \cite{Pop89} that $L^{\otimes k}$ is a pure isometric tuple with multiplicity $\infty$.
\end{proof}

As an immediately consequence of the above proposition, we have that $L^{\otimes k}$ is analytic for all $k\in\bN$.

\section{Analytic free semigroup algebras are Hopf algebras}
\label{S:AnaHop}

Following Effros-Ruan in \cite{ER1}, we define the term Hopf algebras from analysts' point of view as follows.
A \textit{Hopf algebra} $(\A, m, \Delta)$ consists of a linear
space $\A$ with norms or matrix norms, an associative bilinear multiplication $m: \A\times \A\to \A$,
and a coassociative comultiplication $\Delta: A\to \A\widetilde{\otimes} \A$, where
$\widetilde\otimes$ is a suitable tensor product, and $\Delta$ is an algebra homomorphism. The maps
are assumed to be bounded in some appropriate sense.

In this paper, we are interested in two classes of Hopf algebras -- Hopf dual algebras and Hopf convolution dual algebras, where
the latter is induced from the former. The former is investigated in this section, and the latter will
be studied in next section. Some definitions are first.

\begin{defn}
\label{D:coalg}
(i) A dual algebra $\A$ is said to be a \textit{Hopf dual algebra} if there is an injective unital
weak*-continuous completely contractive
homomorphism $\Delta:\A\to \A\ol\otimes \A$
such that it is also coassociative:
\[
(\id\otimes \Delta)\Delta=(\Delta\otimes \id)\Delta.
\]
That is, the following diagram commutes:
\[
\begin{xy}
(0,20)*+{{\A}}="a"; (35,20)*+{{\A\ol\otimes\A}}="b";%
(0,0)*+{{\A}\ol\otimes{\A}}="c"; (35,0)*+{\A\ol\otimes\A\ol\otimes\A.}="d";%
{\ar@{->} "a";"b"};?*!/_2mm/{\Delta};
{\ar@{->} "a";"c"};?*!/^/{\Delta};
{\ar@{->} "b";"d"};?*!/_7mm/{\Delta\otimes\id};
{\ar@{->} "c";"d"};?*!/^2mm/{\id\otimes\Delta},
\end{xy}
\]

The homomorphism $\Delta$ is called a \textit{comultiplication}
or \textit{coproduct} on $\A$.

(ii) A Hopf dual algebra is \textit{counital} if there is a non-zero weak*-continuous homomorphism $\epsilon:\A\to\bC$ such that
\[
(\epsilon\otimes\id)\circ\Delta=\id=(\id\otimes\epsilon)\circ\Delta.
\]
The homomorphism $\epsilon$ is
called a \textit{counit} of $\A$.

(iii) A Hopf dual algebra $\A$ is said to be \textit{cocommutative}  if
$\sigma\circ \Delta=\Delta$, where $\sigma$ is the flip map $a\otimes b\mapsto b\otimes a$.
\end{defn}

We shall use the pair $(\A, \Delta)$, or $(\A, \Delta_\A)$ if $\A$ needs to be stressed, or simply $\A$ if the context is clear,
to denote the Hopf dual algebra $\A$ with the comultiplication $\Delta$.

\begin{defn}
\label{D:mor}
A \textit{morphism of two Hopf dual algebras} $(\A, \Delta_\A)$ and $(\B, \Delta_\B)$ is a unital weak*-continuous
completely contractive homomorphism $\pi:\A\to \B$ that makes $\Delta_\A$ and $\Delta_\B$ compatible in the following sense:
\[
\Delta_\B\circ \pi=(\pi \otimes \pi)\circ \Delta_\A.
\]

If $\pi$ is weak*-weak* homeomorphic and completely isometrical isomorphic, then $\A$ and $\B$ are said to be \textit{isomorphic as dual Hopf algebras}, and denoted
as $(\A,\Delta_\A)\cong(\B,\Delta_\B)$.
\end{defn}

\begin{defn}
\label{D:int}
Let $(\A, \Delta)$ be a Hopf dual algebra. A unital weak*-continuous linear functional $\varphi:\A\to \bC$ is
called a \textit{left} (resp. \textit{right}) \textit{integral} on $(\A,\Delta)$ if it is
left-invariant (resp. right-invariant), that is,
\[
(\id\otimes \varphi)(\Delta(a))=\varphi(a)I\quad (\mbox{resp.  }(\varphi\otimes \id)(\Delta(a))=\varphi(a)I).
\]

A left and right integral is briefly called an \textit{integral}.
\end{defn}

Some remarks for the above definitions are in order.

\begin{rem}
\label{R:sim1}
Note that if $\A$ in Definition \ref{D:coalg} is selfadjoint (i.e., $\A$ is a von Neumann algebra), then $(\A, \Delta)$ is nothing but  a Hopf
von Neumann algebra (\cite{Enock}). This is due to the fact that a unital completely contractive homomorphism between two
C*-algebras is automatically a *-homomorphism (\cite{Pau}).
So, as one expects, the notions of Hopf dual and Hopf von Neumann algebras coincide in the selfadjoint case.
Then, in this case, the notion of morphisms in Definition \ref{D:mor} is the same as the one for Hopf von Neumann algebras.
\end{rem}

\begin{rem}
\label{R:sim3}
All tensor maps involved in Definitions \ref{D:coalg}, \ref{D:mor} and \ref{D:int}
have unique weak*-continuous extensions. For example, $\Delta\otimes\id$ has a unique weak*-continuous extension,
still denoted by $\Delta\otimes\id$, which maps $\A\ol\otimes\A$ to $\A\ol\otimes\A\ol\otimes\A$.
This can be seen, e.g.,  from \cite{ER1} (also cf. the proof of Theorem \ref{T:abscon} below).
\end{rem}

\begin{rem}
\label{R:sim2}
Notice that integrals  in Definition \ref{D:int} are normalized.
It is not hard to check that integrals of a Hopf dual algebra are unique.
If $\A$ is a von Neumann algebra, then an integral is nothing but a Haar state.
\end{rem}

We are now ready to give the first main theorem in this section. It seems to have been overlooked in the classical case.

\begin{thm}
\label{T:coalg}
$(\L_n,\Delta)$ is a cocommutative Hopf dual algebra with the integral $\varphi_0=[\xi_\mt\xi_\mt^*]$,
where $\Delta$ is determined by $L_i\mapsto L_i\otimes L_i$ for $i=1,...,n$.
\end{thm}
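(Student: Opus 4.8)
The plan is to construct $\Delta$ from the analyticity of $L^{\otimes 2}$ established in Proposition \ref{P:LiLi}, and then to verify the remaining axioms on the generators $L_1,\dots,L_n$, where each reduces to a one-line computation.

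First I would observe that the $n$-tuple $[L_1\otimes L_1\ \cdots\ L_n\otimes L_n]$ acting on $\H_n\otimes\H_n$ is exactly $L^{\otimes 2}$, and that it is isometric because $(L_i\otimes L_i)^*(L_j\otimes L_j)=(L_i^*L_j)\otimes(L_i^*L_j)=\delta_{i,j}I$. By Proposition \ref{P:LiLi} this tuple is pure, hence analytic, so Theorem \ref{T:DKP} supplies a weak*-homeomorphic completely isometric isomorphism $\phi$ from the free semigroup algebra $\fT$ generated by $L^{\otimes 2}$ onto $\L_n$ with $\phi(L_i\otimes L_i)=L_i$. I would then set $\Delta:=\phi^{-1}$, so that $\Delta$ is a unital, weak*-continuous, completely isometric (in particular completely contractive and injective) homomorphism with $\Delta(L_i)=L_i\otimes L_i$. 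Since each $L_i\otimes L_i$ lies in the $\wot$-closed algebra $\L_n\ol\otimes\L_n$ (which is $\wot$-closed by the discussion preceding Lemma \ref{L:com}), the range $\fT$ of $\Delta$ is contained in $\L_n\ol\otimes\L_n$, so $\Delta$ indeed maps $\L_n$ into $\L_n\ol\otimes\L_n$. This construction is the crux of the argument; once $\Delta$ is in hand, the rest is bookkeeping.

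For coassociativity and cocommutativity I would check the identities on the generators and then invoke weak*-density together with the fact (Remark \ref{R:sim3}) that the tensored maps extend uniquely and weak*-continuously. On generators,
\[
(\Delta\otimes\id)\Delta(L_i)=(\Delta\otimes\id)(L_i\otimes L_i)=L_i\otimes L_i\otimes L_i=(\id\otimes\Delta)(L_i\otimes L_i)=(\id\otimes\Delta)\Delta(L_i),
\]
and $\sigma\Delta(L_i)=\sigma(L_i\otimes L_i)=L_i\otimes L_i=\Delta(L_i)$, where $\sigma$ is the weak*-continuous completely isometric flip. Since both sides of each identity are weak*-continuous homomorphisms agreeing on the generators of the weak*-dense subalgebra $\A_n$, they agree on all of $\L_n$.

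Finally, for the integral I would expand along the Fourier series. Writing $A\sim\sum_{w\in\Fn}a_wL_w$ with $a_\mt=(A\xi_\mt,\xi_\mt)=\varphi_0(A)$, weak*-continuity of $\Delta$ applied to the Ces\'aro sums $\Sigma_k(A)$ (which are uniformly bounded and converge strongly, hence weak*, to $A$) gives $\Delta(A)=\text{w*-}\lim_k\sum_{|w|<k}(1-\tfrac{|w|}{k})a_w\,(L_w\otimes L_w)$. Since $\varphi_0(L_w)=(L_w\xi_\mt,\xi_\mt)=(\xi_w,\xi_\mt)=\delta_{w,\mt}$, the weak*-continuous slice maps yield
\[
(\varphi_0\otimes\id)(L_w\otimes L_w)=\delta_{w,\mt}I=(\id\otimes\varphi_0)(L_w\otimes L_w),
\]
whence $(\varphi_0\otimes\id)(\Delta(A))=a_\mt I=\varphi_0(A)I=(\id\otimes\varphi_0)(\Delta(A))$. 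Thus the normalized functional $\varphi_0$ is both left- and right-invariant, i.e.\ an integral (unique by Remark \ref{R:sim2}). The one genuine difficulty is the first paragraph: everything hinges on knowing that $L^{\otimes 2}$ is analytic so that Theorem \ref{T:DKP} applies; the algebraic verifications afterward are routine.
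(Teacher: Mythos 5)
Your proof is correct, and it reaches the same destination as the paper's proof from the same key lemma (Proposition \ref{P:LiLi}), but the construction of $\Delta$ is routed differently. The paper first uses Popescu's theorem to get a representation of the disk algebra $\A_n$ sending $L_i\mapsto L_i\otimes L_i$, and then invokes the implication analytic $\Rightarrow$ absolutely continuous to extend it to a weak*-continuous representation of $\L_n$; complete isometry and injectivity of $\Delta$ are not addressed inside this proof at all (the paper only cites \cite{DavYang} for this later, in the proof of Theorem \ref{T:abscon}). You instead apply Theorem \ref{T:DKP} directly to the analytic tuple $L^{\otimes 2}$ and define $\Delta$ as the inverse of the canonical isomorphism $\phi\colon\fT\to\L_n$; this buys you injectivity, unitality, weak*-homeomorphism onto the range, and complete isometry in one stroke, so all the requirements of Definition \ref{D:coalg} are visibly met, at the small extra cost of having to check $\fT\subseteq\L_n\ol\otimes\L_n$ (which you do correctly via $\wot$-closedness). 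For coassociativity and cocommutativity the paper carries out the explicit Ces\'aro-sum computation, whereas you use the abstract equalizer argument: two weak*-continuous homomorphisms agreeing on the generators agree on the weak*-dense subalgebra $\A_n$, hence everywhere. These are equivalent --- the Ces\'aro sums are precisely what makes the density argument concrete --- and both hinge on the same point, recorded in Remark \ref{R:sim3}, that $\Delta\otimes\id$, $\id\otimes\Delta$, and the flip admit weak*-continuous extensions. Your verification of the integral is essentially identical to the paper's, and your slice computation $(\varphi_0\otimes\id)(L_w\otimes L_w)=\delta_{w,\mt}I$ is the right reduction.
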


\begin{proof}
Since $L^{\otimes 2}$ is an isometric $n$-tuple, 
there is a representation
$\Delta$ of the non-commutative disk algebra $\A_n$ determined by
\[
\Delta: \A_n\to \L_n\ol\otimes \L_n, \quad L_i\mapsto L_i\otimes L_i \quad (i=1,...,n).
\]
By Proposition \ref{P:LiLi}, $L^{\otimes 2}$ is analytic and so absolutely continuous. Thus
$\Delta$ can be extended to a weak*-continuous representation of $\L_n$, which is still denoted as $\Delta$ by
abusing notation:
\[
\Delta: \L_n\to \L_n\ol\otimes \L_n, \quad L_i\mapsto L_i\otimes L_i\quad (i=1,...,n).
\]

Let $A\sim\sum_wa_wL_w$ be an arbitrary element in $\L_n$. Then
\begin{align}
\nonumber
(\Delta\otimes\id)\circ\Delta(A)
&=\Delta\otimes\id(\Delta(\sot\!\!-\!\!\lim_k(\Sigma_k(A))))\\
\nonumber
&=\Delta\otimes\id(\Delta(\wot\!\!-\!\!\lim_k(\Sigma_k(A))))\\
\nonumber
&=\Delta\otimes\id(\Delta(\mbox{weak*}\!\!-\!\!\lim_k(\Sigma_k(A))))\\
\nonumber
&=\mbox{weak*}\!\!-\!\!\lim_k\Delta\otimes\id(\Delta(\Sigma_k(A)))\\
\nonumber
&=\mbox{weak*}\!\!-\!\!\lim_k\sum_{|w|<k}(1-\frac{|w|}{k})a_w\Delta\otimes\id(L_w\otimes L_w)\\
\nonumber
&=\mbox{weak*}\!\!-\!\!\lim_k\sum_{|w|<k}(1-\frac{|w|}{k})a_w(L_w\otimes L_w\otimes L_w)\\
\label{E:oper}
&=\wot\!\!-\!\!\lim_k\sum_{|w|<k}(1-\frac{|w|}{k})a_w(L_w\otimes L_w\otimes L_w),
\end{align}
where the above third ``='' uses the fact that $\wot$ and weak* topologies coincide on $\L_n$ by Theorem \ref{T:ww},
the fourth ``='' is from the weak*-continuity of $\Delta\otimes \id $ and $\Delta$,
and the last second one comes from the definition of $\Delta$.
Some obviously slight changes of the above proof yield
\[
(\Delta\otimes\id)\circ\Delta(A)
=\wot\!\!-\!\!\lim_k\sum_{|w|<k}(1-\frac{|w|}{k})a_w(L_w\otimes L_w\otimes L_w).
\]
Thus $\Delta$ is coassociative.

Similarly, one can show the cocommutativity of $\L_n$.
Thus $\L_n$ is a cocommutative Hopf dual algebra.

Finally, we verify that $\varphi_0=[\xi_\mt \xi_\mt^*]$ is the integral of $\L_n$.
Clearly $\varphi_0$ is a weak*-continuous linear functional and $\varphi_0(I)=1$.
Then using some calculations similar to the above, one obtains
\[
(\varphi_0\otimes \id)\circ \Delta(A)=\varphi_0(A)I=(\id\otimes \varphi_0)\circ \Delta(A)
\]
for all $A\in \L_n$. Therefore, $\varphi_0$ is the integral on $\L_n$.
\end{proof}

\begin{rem}
(i) By Proposition \ref{P:LiLi}, $L^{\otimes 3}$ is an analytic isometric $n$-tuple.
From the above proof, one actually obtains that the map
$(\Delta\otimes\id)\circ\Delta$, or $(\id\otimes\Delta)\circ\Delta$,
is nothing but the representation of $\L_n$ extended from that of $\A_n$ induced by $L^{\otimes 3}$:
\begin{align*}
(\Delta\otimes\id)\circ\Delta=(\id\otimes\Delta)\circ\Delta: \L_n&\to \L_n\ol\otimes \L_n\ol\otimes\L_n\\
L_i&\mapsto L_i^{\otimes 3}\quad (i=1,...,n).
\end{align*}
Also, the operator in \eqref{E:oper}
is the one with the Fourier expansion
$\sum_w a_w(L_w\otimes L_w\otimes L_w)$.
(Refer to the proof of Theorem \ref{T:char} below for some related details.)

(ii) It is worth noticing that, in the free group case with $n\ge 2$, the above integral $\varphi_0$ is the canonical faithful trace of the
group von Neumann algebra $\vn(\bF_n)$, which is a II$_1$ factor.
\end{rem}

\medskip
One can now generalize the above result  to an arbitrary analytic isometric tuple.

\begin{thm}
\label{T:abscon}
Suppose that $\fS$ is an analytic free semigroup algebra generated by an isometric $n$-tuple $S=[S_1\, \cdots \, S_n]$. Then
$(\fS,\Delta_\fS)$ is a cocommutative Hopf dual algebra with an integral,
where $\Delta_\fS$ maps $S_i$ to $S_i\otimes S_i$ for all $1\le i\le n$.
\end{thm}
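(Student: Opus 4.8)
The plan is to transport the cocommutative Hopf dual algebra structure on $\L_n$ obtained in Theorem \ref{T:coalg} to $\fS$ by means of the canonical isomorphism furnished by Theorem \ref{T:DKP}. Write $\phi\colon\fS\to\L_n$ for the weak*-homeomorphic completely isometric isomorphism of Theorem \ref{T:DKP}, so that $\phi(S_i)=L_i$ for $i=1,\dots,n$, and let $\Delta$ and $\varphi_0=[\xi_\mt\xi_\mt^*]$ denote the comultiplication and integral on $\L_n$.

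First I would promote $\phi$ to tensor powers. Since $\phi$ is a weak*-homeomorphic complete isometry, it should induce a weak*-homeomorphic completely isometric isomorphism $\Phi:=\phi\ol\otimes\phi\colon \fS\ol\otimes\fS\to\L_n\ol\otimes\L_n$, and likewise $\Phi_3:=\phi\ol\otimes\phi\ol\otimes\phi$ on the triple products; on elementary tensors these act as $a\otimes b\mapsto\phi(a)\otimes\phi(b)$, so in particular $\Phi(S_i\otimes S_i)=L_i\otimes L_i$. I would then define
\[
\Delta_\fS:=\Phi^{-1}\circ\Delta\circ\phi\colon \fS\to\fS\ol\otimes\fS.
\]
By construction $\Delta_\fS(S_i)=\Phi^{-1}(\Delta(L_i))=\Phi^{-1}(L_i\otimes L_i)=S_i\otimes S_i$, and $\Delta_\fS$ is an injective unital weak*-continuous completely contractive homomorphism, being a composite of such maps (using that $\phi$ and $\Phi^{-1}$ are unital weak*-continuous complete isometries and $\Delta$ is an injective unital weak*-continuous completely contractive homomorphism).

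The coalgebra axioms then follow by transport. Using that $\id\otimes\Delta_\fS=\Phi_3^{-1}(\id\otimes\Delta)\Phi$ and $\Delta_\fS\otimes\id=\Phi_3^{-1}(\Delta\otimes\id)\Phi$ (checked on elementary tensors and extended by weak*-continuity), coassociativity of $\Delta_\fS$ reduces to that of $\Delta$:
\[
(\id\otimes\Delta_\fS)\Delta_\fS=\Phi_3^{-1}(\id\otimes\Delta)\Delta\,\phi=\Phi_3^{-1}(\Delta\otimes\id)\Delta\,\phi=(\Delta_\fS\otimes\id)\Delta_\fS.
\]
Writing $\sigma$ for the flip on either double product, one has $\Phi\sigma=\sigma\Phi$, whence $\sigma\Delta_\fS=\Phi^{-1}\sigma\Delta\,\phi=\Phi^{-1}\Delta\,\phi=\Delta_\fS$ from the cocommutativity of $\L_n$. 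Finally I would set $\varphi_\fS:=\varphi_0\circ\phi$, a unital weak*-continuous functional; since $\id\otimes\varphi_\fS=\phi^{-1}(\id\otimes\varphi_0)\Phi$ (and symmetrically on the other side), the invariance identities $(\id\otimes\varphi_0)\Delta(x)=\varphi_0(x)I=(\varphi_0\otimes\id)\Delta(x)$ on $\L_n$ push forward to show $(\id\otimes\varphi_\fS)\Delta_\fS(a)=\varphi_\fS(a)I=(\varphi_\fS\otimes\id)\Delta_\fS(a)$, so $\varphi_\fS$ is an integral on $(\fS,\Delta_\fS)$.

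The one genuinely delicate point is the very first step: a priori the spatial tensor product depends on the ambient representations, so one must verify that $\phi$ induces a \emph{weak*-homeomorphic completely isometric} isomorphism $\Phi=\phi\ol\otimes\phi$ of the normal spatial tensor products. I would obtain this from the functoriality of the normal spatial tensor product under weak*-homeomorphic complete isometries — concretely, by passing to pre-adjoints, where $\phi_*\colon{\L_n}_*\to\fS_*$ is a completely isometric isomorphism of operator spaces, so $\phi_*\widehat\otimes\phi_*$ is one by functoriality of the operator projective tensor product, and then dualizing with the help of Theorem \ref{T:ERR} (and the predual identification in the spirit of Proposition \ref{P:pretenpro}). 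This functoriality is the main obstacle; everything after it is diagram chasing.
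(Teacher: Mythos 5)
Your proposal is correct and follows essentially the same route as the paper: transport the Hopf structure of $\L_n$ to $\fS$ via the canonical isomorphism $\phi$ of Theorem \ref{T:DKP}, with the key tensor-product isomorphism $\Phi$ obtained exactly as in the paper by passing to preduals, applying functoriality of $\widehat\otimes$ to $\phi_*$, and dualizing via Theorem \ref{T:ERR} together with Proposition \ref{P:pretenpro} to identify the Fubini and spatial tensor products. The only cosmetic difference is that you verify coassociativity, cocommutativity and invariance of the integral by conjugation/intertwining identities, whereas the paper re-runs the Ces\`aro-sum argument of Theorem \ref{T:coalg}; both are valid.
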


\begin{proof}
Let $\phi:\fS\to \L_n$ be the canonical weak*-homeomorphic completely isometric isomorphism.
From \cite[Corollary 4.1.9]{ER}, we have that
$\phi_*:{\L_n}_*\to \fS_*$ is completely isometrically isomorphic, and so is
$\phi_*\otimes \phi_*: {\L_n}_*\widehat\otimes {\L_n}_*\to \fS_*\widehat\otimes\fS_*$
by \cite[Proposition 7.1.7]{ER}.
Applying  \cite[Corollary 4.1.9]{ER}  again gives that
$(\phi_*\otimes \phi_*)^*:(\fS_*\widehat\otimes\fS_*)^*\to ({\L_n}_*\widehat\otimes {\L_n}_*)^*$
is a weak*-homeomorphic completely isometric isomorphism.
Hence $(\phi_*\otimes \phi_*)^*$ is a weak*-homeomorphic completely isometric isomorphism
between $\fS\ol\otimes_\F\fS$ and $\L_n\ol\otimes_\F\L_n$ by Theorem \ref{T:ERR}.
But  it follows from Theorem \ref{T:ERR} and Proposition \ref{P:pretenpro} that $\L_n\ol\otimes\L_n=\L_n\ol\otimes_\F\L_n$.
Hence one has
\begin{align}
\label{E:SS}
\fS\ol\otimes\fS=\fS\ol\otimes_\F\fS.
\end{align}
Let $\Phi$ be the inverse of $(\phi_*\otimes \phi_*)^*$ (which is actually the extension of $\phi^{-1}\otimes\phi^{-1}$).
Then $\Phi$ is a weak*-homeomorphic completely isomorphism from $\L_n\ol\otimes\L_n$ onto $\fS\ol\otimes\fS$.

Set $\Delta_\fS:=\Phi\circ\Delta\circ\phi$, where $\Delta$ is the comultiplication on $\L_n$ given in Theorem \ref{T:coalg}.
By \cite{DavYang}, $\Delta$ is weak*-continuous completely isometrically homomorphic.
Then $\Delta_\fS: \fS\to \fS\ol\otimes\fS$ is a unital weak*-continuous completely isometric homomorphism. Also,
from the above analysis, $\Delta_\fS$ maps $S_i$ to $S_i\otimes S_i$
($i=1,..., n$). Then, as in the proof of Theorem \ref{T:coalg},  it is not hard to check the coassociativity of $\Delta_\fS$
and the cocommutativity of $\fS$. Therefore, $\fS$ is a cocommutative Hopf dual algebra.
Let $\epsilon=\varphi_0\circ \phi$. Then one can easily verify that $\epsilon$ is
an integral of $\fS$.
\end{proof}

As a byproduct of the above proof, one can construct analytic isometric tuples from a given one as follows.

\begin{cor}
\label{C:S2abs}
Suppose that $S$  is an analytic isometric tuple.
Then so is $S^{\otimes k}$ for any $k\ge 1$.
\end{cor}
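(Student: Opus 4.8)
The plan is to iterate the coproduct $\Delta_\fS$ furnished by Theorem~\ref{T:abscon}, in the same way that $(\Delta\otimes\id)\circ\Delta$ was seen to realize $L_i\mapsto L_i^{\otimes 3}$ in the remark following Theorem~\ref{T:coalg}. Write $\phi:\fS\to\L_n$ for the canonical weak*-homeomorphic completely isometric isomorphism of Theorem~\ref{T:DKP}, and let $\Delta_\fS:\fS\to\fS\ol\otimes\fS$ be the comultiplication of Theorem~\ref{T:abscon}; recall it is a unital, weak*-continuous, completely isometric homomorphism with $\Delta_\fS(S_i)=S_i\otimes S_i$. Set $\Delta_\fS^{(1)}=\id$, $\Delta_\fS^{(2)}=\Delta_\fS$, and inductively $\Delta_\fS^{(k)}=(\Delta_\fS^{(k-1)}\otimes\id)\circ\Delta_\fS$, a map of $\fS$ into the $k$-fold normal spatial tensor product $\fS\ol\otimes\cdots\ol\otimes\fS$ (here the tensor factor $\Delta_\fS^{(k-1)}\otimes\id$ is the unique weak*-continuous extension supplied by Remark~\ref{R:sim3}). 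Coassociativity of $\Delta_\fS$ makes this unambiguous, and a one-line induction gives $\Delta_\fS^{(k)}(S_i)=S_i^{\otimes k}$, the $i$-th generator of $S^{\otimes k}$ on $\H^{\otimes k}$.

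Next I would verify that $\Delta_\fS^{(k)}$ is a weak*-homeomorphic completely isometric isomorphism of $\fS$ onto the free semigroup algebra $\fT$ generated by $S^{\otimes k}$. That each iterate stays weak*-continuous and completely isometric follows by induction on $k$: the only point is that tensoring the completely isometric map $\Delta_\fS^{(k-1)}$ with $\id$ remains completely isometric on the \emph{spatial} tensor product, which is where one invokes the identity $\fS\ol\otimes\fS=\fS\ol\otimes_\F\fS$ of \eqref{E:SS} together with its $k$-fold analogues (so that spatial and Fubini tensor products agree and slice-map arguments apply). Being weak*-continuous and bounded below, $\Delta_\fS^{(k)}$ is a weak*-homeomorphism onto its range, which is therefore weak*-closed (it is the adjoint of a surjection). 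That range is unital and, since $\Delta_\fS^{(k)}$ is weak*-continuous and carries the weak*-dense norm-closed algebra generated by $S$ into the norm-closed algebra generated by the $S_i^{\otimes k}$, it is exactly the weak*-closed algebra generated by $\{S_1^{\otimes k},\dots,S_n^{\otimes k}\}$, i.e. $\fT$.

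Composing with $\phi$ then produces a weak*-homeomorphic completely isometric isomorphism $\phi\circ(\Delta_\fS^{(k)})^{-1}:\fT\to\L_n$; in particular $\fT$ is algebraically isomorphic to $\L_n$, so $S^{\otimes k}$ is analytic. The case $k=1$ is the hypothesis, and specializing to $S=L$ recovers the analyticity of $L^{\otimes k}$ recorded after Proposition~\ref{P:LiLi}. I expect the one genuine obstacle to be the bookkeeping flagged in the second paragraph: the normal spatial tensor product is not obviously ``injective'' for weak*-continuous complete isometries, so one must argue --- exactly as in the proof of Theorem~\ref{T:abscon}, but now for higher tensor powers --- that $\Delta_\fS^{(k-1)}\otimes\id$ is completely isometric, using the coincidence of the spatial and Fubini tensor products. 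Everything else is a routine propagation of properties already established for $\Delta_\fS$.
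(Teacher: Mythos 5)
Your proof is correct in outline, but it takes a genuinely different route from the paper's. The paper's own proof is a two-line transport argument: Proposition \ref{P:LiLi} is already stated for every $k>1$, so $L^{\otimes k}$ is pure, hence analytic, and Theorem \ref{T:DKP} furnishes a weak*-continuous completely isometric homomorphism $\pi:\L_n\to\ol\otimes_{i=1}^k\L_n$ with $L_i\mapsto L_i^{\otimes k}$; composing with the $k$-fold analogue $\widetilde\Phi$ of the map $\Phi$ from the proof of Theorem \ref{T:abscon} (a weak*-homeomorphic completely isometric isomorphism of $\ol\otimes_{i=1}^k\L_n$ onto $\ol\otimes_{i=1}^k\fS$) gives the weak*-continuous extension of the representation of $\A_n$ induced by $S^{\otimes k}$, whence analyticity. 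You instead build the map $L_i\mapsto S_i^{\otimes k}$ internally by iterating the coproduct $\Delta_\fS$: the only purity input you use is the $k=2$ case hidden inside Theorem \ref{T:abscon}, and the general-$k$ purity of Proposition \ref{P:LiLi} is replaced by an induction whose engine is operator-space duality (the preadjoint of a weak*-continuous complete isometry with weak*-closed range is a complete quotient map, projective tensors of complete quotients are complete quotients, and Fubini $=$ spatial). That inductive step is the real content of your argument, and your treatment of it is sound; note that both your proof and the paper's need the $k$-fold analogue of \eqref{E:SS} and Proposition \ref{P:pretenpro}, which the paper likewise invokes without detailed proof (``as obtaining $\Phi$\dots''). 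What the paper's route buys is brevity, since the hard Fock-space work was already done in Proposition \ref{P:LiLi} for all $k$. What your route buys is economy of inputs and generality: it exhibits the corollary as a formal consequence of the Hopf dual algebra structure alone (the $k=2$ data), so it would apply verbatim to any Hopf dual algebra with a completely isometric coproduct satisfying the predual tensor identity, with no further Fock-space combinatorics; and, as you observe, it is not circular when specialized to $S=L$.
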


\begin{proof}
Let $k\ge 1$ and $S$ be an analytic isometric $n$-tuple.
As obtaining $\Phi$ in the proof of Theorem \ref{T:abscon}, one has that
there is a weak*-homeomorphic completely isometric isomorphism
$\widetilde \Phi$ between $\ol\otimes_{i=1}^k \L_n$ and
$\ol\otimes_{i=1}^k\fS$, the normal spatial tensor product of $k$ copies of $\fS$.
Let $\pi: \L_n\to \ol\otimes_{i=1}^k \L_n$ be the weak*-continuous, completely isometric homomorphism determined by Proposition \ref{P:LiLi}.
Then the composition $\widetilde\Phi\circ\pi$ is the weak*-continuous extension of the representation of $\A_n$ induced by $S^{\otimes k}$.
Therefore, $S^{\otimes k}$ is analytic.
\end{proof}

Suppose that $S$  is an analytic isometric $n$-tuple. Then ``$\L_n = \fS$'' as operator algebras.
As before, one can check that the canonical homomorphism $\sigma: \fS\to \L_n$ makes the comultiplications $\Delta$ of $\L_n$ and $\Delta_\fS$
of $\fS$ compatible: $\Delta\circ\phi=(\phi\otimes \phi)\circ \Delta_\fS$. Thus, ``$\L_n = \fS$''  as Hopf dual algebras.
We record this as a corollary.

\begin{cor}
\label{C:mor}
Suppose that $\fS$ is an analytic free semigroup algebra generated by an isometric $n$-tuple. Then
$(\L_n,\Delta)\cong (\fS,\Delta_\fS)$.
\end{cor}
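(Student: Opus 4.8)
The plan is to show that the canonical isomorphism $\phi:\fS\to\L_n$ furnished by Theorem \ref{T:DKP} is itself an isomorphism of Hopf dual algebras in the sense of Definition \ref{D:mor}. That theorem already supplies everything except one ingredient: $\phi$ is a unital weak*-homeomorphic completely isometric isomorphism carrying $S_i$ to $L_i$ for $i=1,\dots,n$, so the only point remaining is to verify that $\phi$ intertwines the two comultiplications, i.e.
\[
\Delta\circ\phi=(\phi\otimes\phi)\circ\Delta_\fS .
\]
Here $\phi\otimes\phi$ denotes the weak*-continuous extension of $\phi\otimes\phi$ to a map from $\fS\ol\otimes\fS$ onto $\L_n\ol\otimes\L_n$; this extension exists and is a weak*-homeomorphic completely isometric isomorphism, being precisely the inverse $\Phi^{-1}$ of the map $\Phi$ built in the proof of Theorem \ref{T:abscon} (recall that $\Phi$ was defined there as the weak*-continuous extension of $\phi^{-1}\otimes\phi^{-1}$).

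The quickest route is to observe that the required identity is built into the construction of $\Delta_\fS$. Indeed, $\Delta_\fS$ was defined by $\Delta_\fS=\Phi\circ\Delta\circ\phi$; applying $\Phi^{-1}=\phi\otimes\phi$ to both sides immediately gives $(\phi\otimes\phi)\circ\Delta_\fS=\Delta\circ\phi$, which is exactly the compatibility condition of Definition \ref{D:mor}. Since $\phi$ is already weak*-homeomorphic and completely isometric, this establishes $(\L_n,\Delta)\cong(\fS,\Delta_\fS)$.

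If one prefers a more transparent argument, I would instead check the identity on generators and extend by weak*-continuity. Both $\Delta\circ\phi$ and $(\phi\otimes\phi)\circ\Delta_\fS$ are weak*-continuous completely contractive homomorphisms from $\fS$ into $\L_n\ol\otimes\L_n$, and on each generator they agree:
\[
\Delta(\phi(S_i))=\Delta(L_i)=L_i\otimes L_i
=(\phi\otimes\phi)(S_i\otimes S_i)=(\phi\otimes\phi)(\Delta_\fS(S_i)).
\]
Since $S_1,\dots,S_n$ generate $\fS$ as a unital weak*-closed algebra, the two homomorphisms agree on the weak*-dense subalgebra of polynomials in the $S_i$; being weak*-continuous, they therefore coincide on all of $\fS$.

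I do not expect any genuine obstacle, since the substantive analysis — that $\phi$ is a weak*-homeomorphic completely isometric isomorphism, that $\fS\ol\otimes\fS=\fS\ol\otimes_\F\fS$, and that $\phi\otimes\phi$ extends to a weak*-homeomorphic completely isometric isomorphism of the spatial tensor products — was already carried out in proving Theorem \ref{T:abscon}. The one point needing a little care is to confirm that the symbol $\phi\otimes\phi$ appearing in Definition \ref{D:mor} really is the weak*-continuous extension $\Phi^{-1}$, so that the generator-wise equality propagates to the whole algebra; this is immediate from the way $\Phi$ was constructed.
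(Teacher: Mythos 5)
Your proposal is correct and follows essentially the same route as the paper: the paper's (very brief) argument is precisely that the canonical isomorphism $\phi$ of Theorem \ref{T:DKP} intertwines $\Delta$ and $\Delta_\fS$, i.e.\ $\Delta\circ\phi=(\phi\otimes\phi)\circ\Delta_\fS$, which is the compatibility condition of Definition \ref{D:mor}. Your first observation --- that this identity is immediate from the construction $\Delta_\fS=\Phi\circ\Delta\circ\phi$ with $\Phi^{-1}=(\phi_*\otimes\phi_*)^*$ the weak*-continuous extension of $\phi\otimes\phi$ --- simply makes explicit what the paper leaves as ``one can check,'' and your generator-plus-weak*-density argument matches the paper's ``as before.''
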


\section{Hopf Convolution Dual Algebras}
\label{S:HCA}

Let $\fS$ be an analytic free semigroup algebra generated by an isometric $n$-tuple. In this section,
we shall show that its predual $\fS_*$ is also a Hopf algebra.
Naturally, $\fS_*$ and ${\L_n}_*$ are canonically isomorphic as Hopf algebras.
Analogous to the Fourier algebra $A(\bF_n)$, the algebra ${\L_n}_*$ is non-unital.

In order to equip $\fS_*$  with an algebra structure,
let $m_{\fS_*}=(\Delta_\fS)_*$, where $\Delta_\fS$ is the comultiplication of $\fS$
constructed in the previous section. Then it follows essentially from Proposition \ref{P:pretenpro} and Theorem \ref{T:abscon}
 that $m_{\fS_*}$ gives a completely contractive
multiplication of $\fS_*$. Thus $\fS_*$ is a completely contractive Banach algebra.

To construct a coalgebra structure of $\fS_*$, we follow the same line with Effros-Ruan \cite[Section 7]{ER1}.
The idea is sketched as follows. In order to endow $\fS_*$ with  a comultiplication $\Delta_{\fS_*}$,
one needs to use the normal Haagerup tensor product
$\fS\stackrel{\sigma\rm h}\otimes  \fS$, because it is suitable for linearizing the multiplication $m$ on $\fS$ in the following sense:
The multiplication $m: \fS\times\fS\to\fS$  extends uniquely to a weak*-continuous
\textit{completely contractive} map $m_\fS: \fS\stackrel{\rm{\sigma}h}\otimes\fS\to \fS$.
Then it turns out that its preadjoint $(m_{\fS})_*$ produces a comultiplication of $\fS_*$.
To prove that it is indeed an \textit{algebra homomorphism}, we need two important ingredients:
(i) a Shuffle Theorem,
and (ii) $(\fS\ol\otimes\fS)_*\cong \fS_*\widehat\otimes\fS_*$.
Fortunately, we have both (i) and (ii):  (i) holds true ``automatically'' because the Shuffle Theorem \cite[Theorem 6.1]{ER1}
holds true for
\textit{arbitrary} operator spaces,
and (ii) can be proved based on some results obtained in previous sections.

Recall that, given operator spaces $V$ and $W$, the normal and extended Haagerup tensor products are connected by
$V^*\stackrel{\sigma\rm h}\otimes  W^*=(V\stackrel{\rm{eh}}\otimes  W)^*$.
Also, a complex algebra $\A$ is called a \textit{completely contractive Banach algebra}
if $\A$ is an operator space and the multiplication $m:\A\times\A\to\A$
is a completely contractive bilinear mapping, namely, it determines a completely contractive linear mapping $m:\A\widehat\otimes\A\to\A$.

\begin{thm}
\label{T:Conv}
Suppose that $\fS$ is an analytic free semigroup algebra. Then
$(\fS_*, m_{\fS_*}, \Delta_{\fS_*})$ is a Hopf algebra, where
the multiplication $m_{\fS_*}: \fS_*\widehat\otimes\fS_*\to \fS_*$
and the comultiplication $\Delta_{\fS_*}: \fS_*\to \fS_*\stackrel{\rm{eh}}\otimes\fS_*$
are completely contractive, and they are given by
$m_{\fS_*}=(\Delta_\fS)_*$
and
$\Delta_{\fS_*}=(m_\fS)_*$, respectively.
\end{thm}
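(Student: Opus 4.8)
The plan is to follow the Effros--Ruan scheme of \cite[Section~7]{ER1}, transplanted from the von Neumann algebra setting to the non-selfadjoint dual algebra $\fS$, verifying in turn the three defining features of a Hopf algebra: an associative, completely contractive multiplication $m_{\fS_*}$; a coassociative, completely contractive comultiplication $\Delta_{\fS_*}$; and the compatibility that $\Delta_{\fS_*}$ is an algebra homomorphism.

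First I would settle the multiplication, which is essentially recorded in the discussion preceding the statement. By Theorem~\ref{T:abscon}, $\Delta_\fS\colon\fS\to\fS\ol\otimes\fS$ is a weak*-continuous, completely contractive, coassociative homomorphism. Since $\fS$ is analytic, transporting Proposition~\ref{P:pretenpro} along the canonical isomorphism $\phi\colon\fS\to\L_n$ gives ingredient (ii), namely $(\fS\ol\otimes\fS)_*\cong\fS_*\widehat\otimes\fS_*$. Taking the preadjoint of $\Delta_\fS$ and composing with this identification produces a completely contractive $m_{\fS_*}=(\Delta_\fS)_*\colon\fS_*\widehat\otimes\fS_*\to\fS_*$, and dualizing the coassociativity of $\Delta_\fS$ yields associativity of $m_{\fS_*}$; hence $\fS_*$ is a completely contractive Banach algebra.

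Next I would build the comultiplication. The product $m$ on $\fS$ extends uniquely to a weak*-continuous, completely contractive map $m_\fS\colon\fS\stackrel{\sigma\rm h}\otimes\fS\to\fS$ on the normal Haagerup tensor product (as already noted before the statement). Using the normal/extended Haagerup duality $\fS\stackrel{\sigma\rm h}\otimes\fS=(\fS_*\stackrel{\rm{eh}}\otimes\fS_*)^*$, the standard predual of $\fS\stackrel{\sigma\rm h}\otimes\fS$ is $\fS_*\stackrel{\rm{eh}}\otimes\fS_*$, so the preadjoint $\Delta_{\fS_*}=(m_\fS)_*\colon\fS_*\to\fS_*\stackrel{\rm{eh}}\otimes\fS_*$ is completely contractive, and its coassociativity is dual to the associativity of $m_\fS$, i.e.\ to the associativity of the product on $\fS$.

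The main work --- and the step I expect to be the real obstacle --- is to show that $\Delta_{\fS_*}$ is an algebra homomorphism. At the level of $\fS$ this is nothing but the bialgebra compatibility already contained in Theorem~\ref{T:abscon} (that $\Delta_\fS$ is a homomorphism); the difficulty is entirely one of tensor-product bookkeeping, since $m_{\fS_*}$ is carried by the operator projective tensor product $\widehat\otimes$ while $\Delta_{\fS_*}$ lands in the extended Haagerup tensor product $\stackrel{\rm{eh}}\otimes$. The device reconciling these is the Shuffle Theorem \cite[Theorem~6.1]{ER1}, which furnishes a completely contractive ``shuffle'' map
\[
(\fS_*\stackrel{\rm{eh}}\otimes\fS_*)\widehat\otimes(\fS_*\stackrel{\rm{eh}}\otimes\fS_*)\longrightarrow(\fS_*\widehat\otimes\fS_*)\stackrel{\rm{eh}}\otimes(\fS_*\widehat\otimes\fS_*);
\]
composing it with $m_{\fS_*}\stackrel{\rm{eh}}\otimes m_{\fS_*}$ equips $\fS_*\stackrel{\rm{eh}}\otimes\fS_*$ with its tensor-algebra multiplication. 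Because the Shuffle Theorem is valid for arbitrary operator spaces it applies verbatim here, and feeding it together with ingredient (ii) into the diagram chase of \cite[Section~7]{ER1} shows that $\Delta_{\fS_*}\circ m_{\fS_*}$ agrees with the composite built from $\Delta_{\fS_*}\otimes\Delta_{\fS_*}$, i.e.\ that $\Delta_{\fS_*}$ is multiplicative. Assembling the three ingredients then shows that $(\fS_*,m_{\fS_*},\Delta_{\fS_*})$ is a Hopf algebra.
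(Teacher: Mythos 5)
Your proposal is correct and follows essentially the same route as the paper: both obtain $m_{\fS_*}=(\Delta_\fS)_*$ via the identification $(\fS\ol\otimes\fS)_*\cong\fS_*\widehat\otimes\fS_*$ (Proposition \ref{P:pretenpro} together with the identity \eqref{E:SS}), obtain $\Delta_{\fS_*}=(m_\fS)_*$ via the normal/extended Haagerup duality, and dispatch the multiplicativity of $\Delta_{\fS_*}$ by combining that identification with the Shuffle Theorem \cite[Theorem 6.1]{ER1} and the argument of \cite[Theorem 7.1]{ER1}. The only cosmetic difference is that the paper additionally records commutativity of $\ast$ and cites \cite[Corollary 4.1.9]{ER} to see that $(\Delta_\fS)_*$ is a complete quotient map, while you spell out the shuffle map and the duality of (co)associativity slightly more explicitly.
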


\begin{proof}
By Proposition \ref{P:pretenpro} and the identity \eqref{E:SS}, one has
\begin{align}
\label{E:Stenpro}
(\fS\ol\otimes\fS)_* \cong \fS_*\widehat\otimes\fS_*.
\end{align}
Since the comultiplication $\Delta_\fS: \fS\to \fS\ol\otimes\fS$ is
a weak*-continuous complete isometry (from the proof of Theorem \ref{T:abscon}),  it follows from \cite[Corollary 4.1.9]{ER} that
the preadjoint $(\Delta_\fS)_*$
is a completely quotient mapping from ${\fS}_*\widehat\otimes {\fS}_*$ to
${\fS}_*$. Hence $(\fS_*, (\Delta_\fS)_*)$ is a completely contractive Banach algebra.
More precisely, for any $\varphi, \psi\in{\fS}_*$, the multiplication
$\varphi\ast\psi:=(\Delta_\fS)_*(\varphi\otimes \psi)$ is defined by
\[
\varphi\ast\psi(x)=\varphi\otimes \psi(\Delta_\fS(x)) \quad (x\in \fS).
\]
It is now straightforward to check that $\varphi\ast\psi=\psi\ast\varphi$, implying the commutativity of $(\fS_*,\ast)$.
Therefore $({\fS}_*, (\Delta_\fS)_*)$ is a
completely contractive commutative Banach algebra.

Using the isomorphism
\eqref{E:Stenpro}
and the Shuffle Theorem \cite[Theorem 6.1]{ER1},
one can obtain that $(m_\fS)_*$ is a completely contractive (algebra) homomorphism.
The proof is completely similar to that of \cite[Theorem 7.1]{ER1} (also refer to the discussion preceding the statement of this theorem),
and so it is omitted here.

Therefore $(\fS_*,(\Delta_\fS)_*, (m_\fS)_*)$ is a Hopf algebra.
\end{proof}

Applying Theorems \ref{T:coalg} and \ref{T:Conv} to the case of $n=1$, we immediately obtain the following result in the classical case,
which seems new.  Recall that
$(H^\infty)_*=L_1(\bT)/H_1(0)$, where $H_1(0)=zH_1(\bT)$ (\cite{Koo}).

\begin{cor}
$H^\infty$ and $L_1(\bT)/H_1(0)$ are Hopf algebras.
\end{cor}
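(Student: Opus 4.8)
The plan is to obtain the corollary as the $n=1$ specialization of Theorems \ref{T:coalg} and \ref{T:Conv}, once the classical algebras are matched with $\L_1$ and its predual.

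First I would recall the standard dictionary for $n=1$. The free semigroup $\mathbb{F}_1^+$ is then the additive semigroup $\bN$, and the left regular representation collapses to the unilateral shift $L_1$ on $\ell^2(\bN)$. The $\wot$-closed algebra $\L_1$ it generates is precisely the classical analytic Toeplitz algebra, which is weak*-homeomorphically and completely isometrically isomorphic to $H^\infty$ by sending $L_1$ to the coordinate multiplier. Under this identification the unique predual ${\L_1}_*$ becomes $L_1(\bT)/H_1(0)$ with $H_1(0)=zH_1(\bT)$.

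With this dictionary in hand, I would simply invoke the two theorems with $n=1$. Theorem \ref{T:coalg} applies verbatim: $(\L_1,\Delta)$ is a cocommutative Hopf dual algebra, the comultiplication $\Delta$ being the weak*-continuous homomorphism determined by $L_1\mapsto L_1\otimes L_1$ (whose existence for $n=1$ rests on Proposition \ref{P:LiLi}, giving that $L^{\otimes 2}$ is pure of infinite multiplicity, hence analytic), with integral $\varphi_0=[\xi_\mt\xi_\mt^*]$. Transporting along $\L_1\cong H^\infty$, this shows $H^\infty$ is a Hopf algebra; concretely the coproduct is $z\mapsto z\otimes z$, i.e.\ $\Delta(f)(z,w)=f(zw)$, which records the semigroup law of $\bN$. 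For the predual, Theorem \ref{T:Conv} with $\fS=\L_1$ gives that ${\L_1}_*$ is a Hopf algebra with multiplication $(\Delta)_*$ and comultiplication $(m_{\L_1})_*$; transporting along ${\L_1}_*\cong L_1(\bT)/H_1(0)$ yields that $L_1(\bT)/H_1(0)$ is a Hopf algebra, completing the proof.

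I do not expect any genuine analytic obstacle, since all the substantive work has already been carried out in the two cited theorems. The only matters requiring attention are the bookkeeping identifications $\L_1\cong H^\infty$ and ${\L_1}_*\cong L_1(\bT)/H_1(0)$, together with the verification that the abstract structure maps carry over to well-defined maps on the classical side; but the well-definedness of $\Delta$ in the case $n=1$ is exactly supplied by Proposition \ref{P:LiLi} and the argument of Theorem \ref{T:coalg}, so no further work is needed.
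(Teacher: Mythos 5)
Your proposal is correct and follows exactly the paper's own route: the paper likewise obtains the corollary by specializing Theorems \ref{T:coalg} and \ref{T:Conv} to $n=1$, using the identifications $\L_1\cong H^\infty$ and $(H^\infty)_*=L_1(\bT)/H_1(0)$ with $H_1(0)=zH_1(\bT)$. The only difference is that you spell out the bookkeeping (the dictionary and the role of Proposition \ref{P:LiLi}) which the paper leaves implicit.
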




Following Effros-Ruan \cite{ER1}, Hopf algebras obtained from Theorem \ref{T:Conv} are called \textit{Hopf convolution dual algebras}.
Of course, counits, morphisms and integrals can be defined as those in Definitions \ref{D:coalg}, \ref{D:mor} and \ref{D:int},
but without the weak*-continuity requirement.

\medskip

Since $\bF_n$ is a free group, it is well known that the Fourier algebra $A(\bF_n)$ is not unital.
Actually, for $n\ge 2$, more is true: $A(\bF_n)$ does not have bounded approximate identities.
In our case, so far we can show that ${\L_n}_*$ is not unital. Before proving this result, let us first recall from
\cite{Dav2, DavPit} that the set $M({\L_n})$ of
$\wot$-continuous multiplicative linear functionals consists of those linear functionals having the form
$\varphi_\lambda=[\nu_\lambda\nu_\lambda^*]$, where $\lambda\in\bB_n$ (the open unit ball of $\bC^n$) and
$\nu_\lambda$ is a certain unit vector in $\H_n$ (whose precise definition is not important for us and so is omitted here).
It is known that $\varphi_\lambda(p(L))=p(\lambda)$ for any polynomial
$p=\sum_wa_ww$ in the semigroup algebra $\bC\Fn$.

\begin{prop}
\label{P:nonuni}
If $\fS$ is an analytic free semigroup algebra, then $\fS_*$ is non-unital.
\end{prop}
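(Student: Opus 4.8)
The plan is to reduce to the model algebra and then produce a concrete obstruction on Fourier coefficients. By Corollary \ref{C:mor} the canonical map gives $(\L_n,\Delta)\cong(\fS,\Delta_\fS)$ as Hopf dual algebras, so the induced completely isometric isomorphism $\phi_*:{\L_n}_*\to\fS_*$ intertwines the two convolution products $(\Delta_\fS)_*$ and $(\Delta)_*$. Hence $\fS_*$ is unital if and only if ${\L_n}_*$ is, and it suffices to prove that ${\L_n}_*$ has no unit for the product $\ast$.

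Suppose, for a contradiction, that $e\in{\L_n}_*$ is a unit. First I would pin down its values on the $L_w$. For each $w\in\Fn$ set $\psi_w=[\xi_\mt\xi_w^*]\in{\L_n}_*$, so that $\psi_w(L_v)=(\xi_v,\xi_w)=\delta_{v,w}$; in particular $\psi_w(L_w)=1$. Since $\Delta(L_w)=L_w\otimes L_w$ (these are the group-like elements), the definition of $\ast$ together with $e\ast\psi_w=\psi_w$ yields
\[
\psi_w(L_w)=(e\ast\psi_w)(L_w)=(e\otimes\psi_w)(\Delta(L_w))=e(L_w)\,\psi_w(L_w),
\]
which forces $e(L_w)=1$ for every $w\in\Fn$.

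Next I would invoke the structure of the predual: by Theorem \ref{T:ww} (property $\bA_1(1)$), equivalently by the description ${\L_n}_*=\{[\xi\eta^*]\}$ recalled in the introduction, we may write $e=[\xi\eta^*]$ for some $\xi,\eta\in\H_n$, so that $e(L_w)=(L_w\xi,\eta)$. The contradiction then comes from a decay estimate. Letting $P_w$ be the orthogonal projection onto $\ran L_w$, one has $(L_w\xi,\eta)=(L_w\xi,P_w\eta)$, whence $|e(L_w)|\le\norm{\xi}\,\norm{P_w\eta}$. Writing $\eta=\sum_v d_v\xi_v$, we get $\norm{P_w\eta}^2=\sum_{z\succeq w}|d_z|^2$, the sum over words $z$ having $w$ as a prefix; given $\epsilon>0$ and a finite set $F$ with $\sum_{v\notin F}|d_v|^2<\epsilon^2$, every such $z$ with $|w|$ larger than the lengths of all words in $F$ lies outside $F$, so $\norm{P_w\eta}<\epsilon$. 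Thus $e(L_w)\to 0$ as $|w|\to\infty$, contradicting $e(L_w)=1$.

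The only genuinely analytic step is this last decay $(L_w\xi,\eta)\to 0$, and I expect it to be where care is needed (one could alternatively use Bessel's inequality over the orthogonal family $\{L_w\xi:|w|=k\}$ when $n\ge2$, but the projection estimate has the virtue of working uniformly for all $n\ge1$, so it also settles the classical case $n=1$); everything preceding it is formal bookkeeping about the convolution product and the group-like elements $L_w$.
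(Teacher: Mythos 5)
Your proof is correct, but it reaches the contradiction by a genuinely different route from the paper's. The paper reduces unitality of $\fS_*$ to the non-existence of a counit on $\fS$: a unit would yield a weak*-continuous \emph{multiplicative} functional $\epsilon$ on $\L_n$ with $\epsilon(L_w)=1$ for all $w\in\Fn$, and then the classification of $\wot$-continuous multiplicative functionals recalled from \cite{Dav2, DavPit}, namely $M(\L_n)=\{\varphi_\lambda:\lambda\in\bB_n\}$ with $\varphi_\lambda(L_w)=w(\lambda)$, forces $\lambda_i=1$ for all $i$, which is impossible for $\lambda$ in the open ball $\bB_n$. You never invoke multiplicativity: testing the would-be unit against the rank-one functionals $[\xi_\mt\xi_w^*]$ gives $e(L_w)=1$ directly, and your contradiction comes instead from writing $e=[\xi\eta^*]$ (the rank-one description of the predual, i.e.\ property $\bA_1(1)$ of Theorem \ref{T:ww}) and proving the Riemann--Lebesgue-type decay $(L_w\xi,\eta)\to 0$ as $|w|\to\infty$ via the projections onto $\ran L_w$; that estimate is correct as you state it. Your route buys two things: it avoids the character classification entirely, using only ingredients already established in the paper plus elementary Hilbert-space estimates, and it sidesteps a point the paper leaves implicit in ``it suffices to show that $\fS$ is not counital'' --- namely that a unit of $\fS_*$ is automatically a homomorphism on $\fS$ (true, since the group-like elements $L_w$ span a weak*-dense subalgebra and multiplication is separately weak*-continuous, but it does require an argument under the paper's Definition \ref{D:coalg}(ii)). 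What the paper's approach buys is brevity: granted the known description of $M(\L_n)$, the contradiction is immediate, whereas your decay estimate is the analytic work replacing it. Both arguments, as you note, cover all $n\ge 1$, including the classical case.
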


\begin{proof}
It is not hard to see that it suffices to show that
$\fS$ is not counital.
To the contrary, assume that it has a counit $\epsilon_\fS$. Then
$\epsilon:=\phi^{-1}\circ \epsilon_\fS$ is a counit of $\L_n$. So $\epsilon=\varphi_\lambda$ for some $\lambda\in\bB_n$.
From the identity required in the definition of a counit, it is easy to check that $\epsilon(L_w)=1$ for all $w\in\Fn$. However,
as mentioned above,
$\varphi_\lambda(L_w)=w(\lambda)$,
where $w(\lambda)=\lambda_1^{k_1}\cdots\lambda_n^{k_n}$ for
$w=i_1^{k_1}\cdots i_n^{k_n}$. This particularly forces $\lambda_i=1$ for all $i=1,...,n$.
Obviously, this is impossible as $\lambda\in\bB_n$.
\end{proof}

It is worthy to notice that $M({\L_n})$ is also closed under the multiplication $\ast$ of ${\L_n}_*$
and has an involution $\dag$.
In fact, for $\varphi_\lambda,\ \varphi_\mu\in \{\varphi_\nu: \nu\in\bB_n\}$, then
$\varphi_\lambda \ast \varphi_\mu=\varphi_{\lambda*\mu}$, where $\lambda*\mu=(\lambda_1\mu_1,...,\lambda_n\mu_n)$
for $\lambda=(\lambda_1,...,\lambda_n)$ and $\mu=(\mu_1,...,\mu_n)$ in $\bB_n$.
Also, $\dag$ can be defined by $\varphi_\lambda^\dag=\varphi_{\ol\lambda}$.

\medskip

If $\fS$ is an analytic free semigroup algebra generated by an isometric $n$-tuple,
then $\fS$ and $\L_n$ are completely isometrically isomorphic. So $\fS_*\cong {\L_n}_*$ as operator spaces.
The following result tells us this is also the case as Hopf convolution algebras.

\begin{prop}
Suppose that $\fS$ is an analytic free semigroup algebra generated by an isometric $n$-tuple.
Then $\fS_*\cong {\L_n}_*$ as Hopf convolution algebras.
\end{prop}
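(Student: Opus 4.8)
The plan is to take the preadjoint $\phi_*$ of the canonical isomorphism $\phi:\fS\to\L_n$ of Theorem \ref{T:DKP} and to show that it is the desired isomorphism of Hopf convolution algebras. First I would record that, since $\phi$ is a weak*-homeomorphic completely isometric isomorphism, \cite[Corollary 4.1.9]{ER} yields that $\phi_*:{\L_n}_*\to\fS_*$, $\phi_*(\omega)=\omega\circ\phi$, is a completely isometric isomorphism of operator spaces. The work then reduces to checking that $\phi_*$ respects the two structures carried by these preduals in Theorem \ref{T:Conv}: the convolution product $m_{\fS_*}=(\Delta_\fS)_*$ and the comultiplication $\Delta_{\fS_*}=(m_\fS)_*$, together with the analogous data $m_{{\L_n}_*}=(\Delta)_*$ and $\Delta_{{\L_n}_*}=(m_{\L_n})_*$ on ${\L_n}_*$.

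For the multiplication I would argue directly on functionals. The convolution on each predual is $\varphi\ast\psi=(\varphi\otimes\psi)\circ\Delta$, where $\Delta$ denotes the comultiplication on $\L_n$ or on $\fS$. The input I would invoke is Corollary \ref{C:mor}, which says $\phi$ is a morphism of Hopf dual algebras, i.e.\ $\Delta\circ\phi=(\phi\otimes\phi)\circ\Delta_\fS$. Combining this with the elementary identity $\phi_*(\omega_1)\otimes\phi_*(\omega_2)=(\omega_1\otimes\omega_2)\circ(\phi\otimes\phi)$ on $\fS\ol\otimes\fS$, a one-line evaluation
\[
\phi_*(\omega_1\ast\omega_2)(x)=(\omega_1\otimes\omega_2)\big(\Delta(\phi(x))\big)=(\omega_1\otimes\omega_2)\big((\phi\otimes\phi)(\Delta_\fS(x))\big)
\]
shows $\phi_*(\omega_1\ast\omega_2)=\phi_*(\omega_1)\ast\phi_*(\omega_2)$, so $\phi_*$ is an algebra homomorphism for the convolution products.

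For the comultiplication I would dualize the fact that $\phi$ is itself an algebra isomorphism $\fS\to\L_n$. This linearizes to a weak*-continuous completely isometric isomorphism $\phi\stackrel{\sigma\rm h}\otimes\phi$ of the normal Haagerup tensor products intertwining the multiplications, $\phi\circ m_\fS=m_{\L_n}\circ(\phi\stackrel{\sigma\rm h}\otimes\phi)$. Passing to preadjoints and using the duality $V^*\stackrel{\sigma\rm h}\otimes W^*=(V\stackrel{\rm{eh}}\otimes W)^*$ recalled before Theorem \ref{T:Conv}, which identifies $(\phi\stackrel{\sigma\rm h}\otimes\phi)_*$ with $\phi_*\stackrel{\rm{eh}}\otimes\phi_*$, I would obtain $\Delta_{\fS_*}\circ\phi_*=(\phi_*\stackrel{\rm{eh}}\otimes\phi_*)\circ\Delta_{{\L_n}_*}$, the required compatibility of the comultiplications. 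Together with the previous paragraph this shows $\phi_*$ is an isomorphism of Hopf convolution algebras.

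The step I expect to be the main obstacle is precisely this passage to preadjoints on the Haagerup tensor products: one must verify carefully that $(\phi\stackrel{\sigma\rm h}\otimes\phi)_*=\phi_*\stackrel{\rm{eh}}\otimes\phi_*$ and that all of the tensor maps in sight carry the claimed unique weak*-continuous extensions, so that the intertwining identity for the multiplications genuinely survives dualization. These are standard consequences of the functoriality of the normal and extended Haagerup tensor products and of the duality relating them, but they are the only points where more than a routine verification is needed; the convolution computation, by contrast, is entirely formal once Corollary \ref{C:mor} is in hand.
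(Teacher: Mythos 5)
Your proposal is correct and follows essentially the same route as the paper: the same map $\phi_*$ obtained from \cite[Corollary 4.1.9]{ER}, the same direct evaluation on functionals (using the compatibility $\Delta\circ\phi=(\phi\otimes\phi)\circ\Delta_\fS$) to show $\phi_*$ intertwines the convolution products, and the same dualization of the intertwining $\phi\circ m_\fS=m_{\L_n}\circ(\phi\stackrel{\sigma\rm h}\otimes\phi)$ over the normal Haagerup tensor product, passed to preadjoints with the identification $(\phi\stackrel{\sigma\rm h}\otimes\phi)_*=\phi_*\stackrel{\rm eh}\otimes\phi_*$, to get compatibility of the comultiplications. If anything, you make explicit the preadjoint identification on the Haagerup tensor products that the paper leaves implicit in its commuting diagrams.
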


\begin{proof}
For brevity, we use $*$ for the multiplication of ${\L_n}_*$ and that of $\fS_*$ constructed above.
It follows from \cite[Corollary 4.1.9]{ER} that $\phi_*:{\L_n}_*\to \fS_*$ is completely isometric.
We now show that it is also an algebra homomorphism,
namely, $\phi_*(f\ast g)=\phi_*(f) \ast \phi_*(g)$ for all $f,g\in{\L_n}_*$.
Indeed, we have
\begin{align*}
\phi_*(f\ast g)
&=(f\ast g)\circ\phi=(f\otimes g)\circ\Delta\circ \phi\\
&=(f\otimes g)\circ\Phi\circ\Delta_\fS \\
&=(f\otimes g)\circ(\phi\otimes\phi)\circ\Delta_\fS \\
&=(\phi_*\otimes \phi_*)\circ(f\otimes g)\circ\Delta_\fS \\
&=(\phi_*(f)\otimes \phi_*(g))\circ\Delta_\fS \\
&=\phi_*(f)\ast\phi_*(g).
\end{align*}
Also, it is easy to check that $(\phi_*)^{-1}=(\phi^{-1})_*$.
Therefore, $\phi_*$ is an algebra isomorphism.

In order to show that $\phi_*$ is compatible with the comultiplications on ${\L_n}_*$ and $\fS_*$,
consider the following commuting diagram
\[
\begin{xy}
(0,20)*+{{\fS}}="a"; (40,20)*+{{\L_n}}="b";%
(0,0)*+{{\fS}\stackrel{\rm{\sigma h}}\otimes{\fS}}="c"; (40,0)*+{{\L_n}\stackrel{\rm{\sigma h}}\otimes{\L_n}}="d";%
{\ar@{->} "a";"b"};?*!/_2mm/{\phi};
{\ar@{->} "c";"a"};?*!/_5mm/{m_{\fS}};
{\ar@{->} "d";"b"};?*!/^5mm/{m_{\L_n}};
{\ar@{->} "c";"d"};?*!/^2mm/{\phi\otimes\phi},
\end{xy}
\]
and then take its preadjoint
\[
\begin{xy}
(0,20)*+{{\fS}_*}="a"; (40,20)*+{{\L_n}_*}="b";%
(0,0)*+{{\fS}_*\stackrel{\rm{eh}}\otimes{\fS}_*}="c"; (40,0)*+{{\L_n}_*\stackrel{\rm{eh}}\otimes{{\L_n}_*}}="d";%
{\ar@{<-} "a";"b"}?*!/_2mm/{\phi_*};
{\ar "a";"c"};{\ar "b";"d"};%
{\ar "a";"c"};?*!/^5mm/{\Delta_{{\fS_*}}};
{\ar@{->} "b";"d"};?*!/_5mm/{\Delta_{{L_n}_*}};
{\ar@{<-} "c";"d"};?*!/^4mm/{\phi_*\stackrel{\rm{eh}}\otimes\phi_*};
\end{xy}.
\]
\end{proof}

\section{Completely Bounded Representations of $\fS_*$} 
\label{S:cbrep}

Let $\fS$ be an analytic free semigroup algebra.
In this section, we shall study completely bounded representations of $\fS_*$.
Through those representations, $\fS$ and $\fS_*$ are closely connected with each other.
On one hand, we show that  there is a bijection between
the set of completely bounded representations of $\fS_*$ and the set of corepresentations of $\fS$. On the other hand,
$\fS$ can be recovered by completely bounded representations of $\fS_*$:
The algebra induced from the coefficient operators of completely bounded representations
of $\fS_*$ is precisely $\fS$. This gives us a new duality between $\fS_*$ and $\fS$.

As an amusing application of the former bijection, we prove that the spectrum of $\fS_*$, as a
commutative Banach algebra, is precisely $\Fn$.
In order to achieve the latter duality, one needs to
introduce the notion of tensor products of completely bounded representations of $\fS_*$.
Fortunately, for this one can borrow it from \cite[Section 7]{ER1}. Actually, the entire subsection on tensor products
is from there. It should be mentioned that we have the equality $\C(\fS_*)=\fS$ as Theorem \ref{T:dual},
rather than just an inclusion $\C(\fS_*)\subseteq\fS$ in \cite[Theorem 7.2]{ER1}, because we are dealing with only
 ``Fourier algebras'', rather than general Hopf convolution dual algebras.

\subsection*{Corepresentations of $\fS$}

We begin this subsection with the following definition.

\begin{defn}
\label{D:corep}
Let $(\A, \Delta)$ be a Hopf dual algebra acting on $\H$.
A \textit{corepresentation} of  $(\A,\Delta)$ on a Hilbert space $\K$ is an (arbitrary) operator $V\in \A\otl\B(\K)$ satisfying
\[
V_{1,3}V_{2,3}= (\Delta\otimes \id)(V).
\]
Here $V_{1,3}$ and $V_{2,3}$ are the standard \textit{leg notation} (cf. \cite{BS}):  $V_{1,3}$ is a linear operator on the Hilbert space
$\H\otimes\H\otimes\K$, which acts as $V$ on the first and third tensor factors and as the identity on the second one.
The notation $V_{2,3}$ is defined similarly.
\end{defn}

Before stating our first main theorem in this section, we need the following lemma, which
is probably a folklore. Here we give a direct proof based on special properties of
$\L_n$.

\begin{lem}
\label{L:LnBK}
Suppose that $\fS$ is an analytic free semigroup algebra. Then
we have a completely isometric isomorphism
\[
\C\B({\fS}_*, \B(\K)) \cong \fS \ol\otimes \B(\K).
\]
\end{lem}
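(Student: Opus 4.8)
The plan is to identify $\C\B(\fS_*,\B(\K))$ with a normal Fubini tensor product through the standard operator space duality, and then to show that this Fubini product coincides with the normal spatial tensor product $\fS\ol\otimes\B(\K)$ by means of the Ces\`aro (Fej\'er) sums available on $\fS$ --- the ``special property'' of $\L_n$ referred to in the statement.

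First I would apply the duality $\C\B(V,W^*)\cong(V\widehat\otimes W)^*$ (\cite{ER}) with $V=\fS_*$ and $W=\B(\K)_*$, so that $W^*=\B(\K)$; this yields a completely isometric identification
\[
\C\B(\fS_*,\B(\K))\cong(\fS_*\widehat\otimes\B(\K)_*)^*.
\]
By Theorem \ref{T:ERR}(i), the right-hand side is in turn completely isometrically and weak*-homeomorphically the normal Fubini tensor product $\fS\ol\otimes_\F\B(\K)$, where $\fS=(\fS_*)^*$ and $\B(\K)=(\B(\K)_*)^*$. Since $\fS\ol\otimes\B(\K)\subseteq\fS\ol\otimes_\F\B(\K)$ always, the lemma is reduced to proving the reverse inclusion $\fS\ol\otimes_\F\B(\K)\subseteq\fS\ol\otimes\B(\K)$.

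Here I would use the Ces\`aro sums. Let $\phi:\fS\to\L_n$ be the isomorphism of Theorem \ref{T:DKP}, put $S_w=\phi^{-1}(L_w)$, and let $\varphi_w\in\fS_*$ be the $w$-th Fourier coefficient functional $\varphi_w=\varphi_w^{\L_n}\circ\phi$, so that $\Sigma_k^\fS(A)=\sum_{|w|<k}(1-\tfrac{|w|}{k})\varphi_w(A)S_w=\phi^{-1}(\Sigma_k(\phi(A)))$ converges to $A$ in the weak* topology for every $A\in\fS$. Fix $T\in\fS\ol\otimes_\F\B(\K)$. The key observation is that the slice $(\varphi_w\otimes\id)(T)\in\B(\K)$ is well defined for $\varphi_w\in\fS_*$: choosing any lift of $\varphi_w$ to $\B(\H)_*$, any two lifts differ by an element $\psi\in\fS_\perp$, and $(\psi\otimes\id)(T)=0$ because $\rho((\psi\otimes\id)(T))=\psi((\id\otimes\rho)(T))=0$ for all $\rho\in\B(\K)_*$, using $(\id\otimes\rho)(T)\in\fS$. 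I would then set
\[
\Psi_k(T)=\sum_{|w|<k}\Big(1-\tfrac{|w|}{k}\Big)\,S_w\otimes(\varphi_w\otimes\id)(T),
\]
a finite sum of elementary tensors from $\fS\odot\B(\K)$, so that $\Psi_k(T)\in\fS\ol\otimes\B(\K)$. Moreover $\Psi_k=\big((\Sigma_k^\fS)_*\otimes\id\big)^*$ is a complete contraction on $\fS\ol\otimes_\F\B(\K)$, whence $\sup_k\|\Psi_k(T)\|\le\|T\|$.

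Finally I would verify that $\Psi_k(T)\to T$ weak*. Since weak* convergence may be tested on the dense span of the elementary functionals $\omega\otimes\rho$ with $\omega\in\fS_*$, $\rho\in\B(\K)_*$, I would set $F_\rho=(\id\otimes\rho)(T)\in\fS$ and compute
\[
(\omega\otimes\rho)\big(\Psi_k(T)\big)=\sum_{|w|<k}\Big(1-\tfrac{|w|}{k}\Big)\omega(S_w)\varphi_w(F_\rho)=\omega\big(\Sigma_k^\fS(F_\rho)\big)\longrightarrow\omega(F_\rho)=(\omega\otimes\rho)(T),
\]
the convergence following from the weak* convergence of the Ces\`aro sums and the weak* continuity of $\omega$. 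With the uniform bound above, this gives $\Psi_k(T)\to T$ weak*; as $\fS\ol\otimes\B(\K)$ is weak*-closed and contains each $\Psi_k(T)$, we conclude $T\in\fS\ol\otimes\B(\K)$, proving $\fS\ol\otimes_\F\B(\K)=\fS\ol\otimes\B(\K)$ and hence the lemma. The main obstacle is exactly this collapse of the Fubini product onto the spatial one: a bare commutant computation (here $(\fS\ol\otimes\B(\K))'=\fS'\otimes I$) only shows that $T$ lies in the bicommutant and cannot by itself leave the Fubini product, so the Fej\'er approximation supplied by the $\Sigma_k^\fS$ is the decisive ingredient.
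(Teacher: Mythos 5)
Your proof is correct, but it takes a genuinely different route at the decisive step. Both arguments share the same skeleton: identify $\C\B(\fS_*,\B(\K))\cong(\fS_*\widehat\otimes\B(\K)_*)^*$ via \cite[Corollary 7.1.5]{ER}, identify the latter with the Fubini product $\fS\ol\otimes_\F\B(\K)$ via Theorem \ref{T:ERR}(i), and then reduce everything to the collapse $\fS\ol\otimes_\F\B(\K)=\fS\ol\otimes\B(\K)$. The paper carries out this collapse first for $\L_n$, by a commutant argument modelled on Proposition \ref{P:pretenpro}: slice maps show that every $T$ in the Fubini product commutes with $(\L_n\ol\otimes\B(\K))'=\R_n\ol\otimes\bC I$, and the observed bicommutant identity $(\L_n\ol\otimes\B(\K))''=\L_n\ol\otimes\B(\K)$ then places $T$ in the spatial product; the case of a general analytic $\fS$ is obtained afterwards by transferring through the canonical isomorphism $\phi$ of Theorem \ref{T:DKP}, as in the proof of Theorem \ref{T:abscon}. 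You instead work with $\fS$ directly and obtain the collapse by Fej\'er approximation: well-definedness of the slices $(\varphi_w\otimes\id)(T)$ modulo $\fS_\perp$, the finite-rank approximants $\Psi_k(T)\in\fS\odot\B(\K)$, the uniform bound coming from (complete) contractivity of the Ces\`aro maps, and weak* convergence tested against elementary functionals. What your route buys is self-containedness: it never invokes a double-commutant identity, which for non-selfadjoint weak*-closed algebras is not automatic and is exactly what the paper leaves as an ``observation''; indeed, since the Fubini product coincides with $(\R_n\ol\otimes\bC I)'$, your Ces\`aro argument in effect proves that identity. What the paper's route buys is brevity, given that Lemma \ref{L:com} and Proposition \ref{P:pretenpro} are already in place. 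One caveat: your closing claim that a commutant computation ``cannot by itself'' yield the collapse is overstated --- it does suffice once the bicommutant identity is granted, which is precisely how the paper argues; the accurate statement is that this identity is itself the nontrivial content, and your approximation argument is what substantiates it. You also use implicitly that the Ces\`aro maps $\Sigma_k$ are completely contractive on $\L_n$ (hence on $\fS$ after transfer by $\phi$), and that weak* convergence of the bounded sequence $\Psi_k(T)$ against $\fS_*\odot\B(\K)_*$ upgrades to weak* convergence in $\B(\H\otimes\K)$; both are standard and unproblematic, but deserve a sentence each in a final write-up.
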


\begin{proof}
We first claim that this lemma holds true when $\fS=\L_n$:
\begin{align}
\label{E:Lncor}
\C\B({\L_n}_*, \B(\K))\cong\L_n\ol\otimes \B(\K).
\end{align}

To this end, observe that
\[
(\L_n\ol\otimes \B(\K))'=\R_n\ol\otimes \bC I
\quad \mbox{and}\quad
(\L_n\ol\otimes \B(\K))''=\L_n\ol\otimes \B(\K).
\]
Then, applying some obvious modifications to the proof of Proposition \ref{P:pretenpro}, one can show that
$({\L_n}_*\widehat\otimes \B(\K)_*)^*$ and  $\L_n\ol\otimes \B(\K)$
are completely isometrically isomorphic.
On the other hand, there is a natural completely isometric isomorphism
$({\L_n}_*\widehat\otimes \B(\K)_*)^*\cong\C\B({\L_n}_*, \B(\K))$ by \cite[Corollary 7.1.5]{ER}.
This proves our claim.

Now suppose $\fS$ is an arbitrary analytic free semigroup algebra. Using a proof completely similar
to that of Theorem \ref{T:abscon}, we obtain that
\[
({\fS}_*\widehat\otimes \B(\K)_*)^*\cong({\L_n}_*\widehat\otimes \B(\K)_*)^*
\]
and
\[
\fS\ol\otimes \B(\K) \cong \L_n\ol\otimes \B(\K).
\]
Therefore
\begin{align*}
\C\B({\fS}_*, \B(\K))
& \cong({\fS}_*\widehat\otimes \B(\K)_*)^*\cong({\L_n}_*\widehat\otimes \B(\K)_*)^*\\
& \cong\C\B({\L_n}_*, \B(\K))\cong\L_n\ol\otimes \B(\K) \\
& \cong \fS \ol\otimes \B(\K),
\end{align*}
where the first $\cong$ is from \cite[Corollary 7.1.5]{ER}, while the fourth one is from \eqref{E:Lncor}.
\end{proof}

\begin{thm}
\label{T:corep}
Suppose that $\fS$ is an analytic free semigroup algebra acting on a Hilbert space $\H$. Then
there is a bijection between the set of all corepresentations of $\fS$
and the set of all completely bounded representations
of $\fS_*$.

More precisely, if $V$ is a corepresentation of $\fS$ on $\K$, then the corresponding representation $\pi_V$ of ${\fS}_*$ is given by
\begin{align}
\label{E:piV}
\pi_V(\varphi)=(\varphi\otimes \id)(V) \quad (\varphi\in\fS_*);
\end{align}
if $\pi$ is a completely bounded representation of $\fS_*$ on $\K$, then the
corresponding corepresentation $V_\pi$ of $\fS$ is given by
\begin{align}
\label{E:Vpi}
(V_\pi(\xi\otimes x), \eta\otimes y)=(\pi([\xi\eta^*]) x,y)
\end{align}
for all $\xi,\eta\in\H$, $x,y\in\K$.
\end{thm}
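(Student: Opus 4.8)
The plan is to establish the stated bijection by constructing the two maps $V \mapsto \pi_V$ and $\pi \mapsto V_\pi$ explicitly via the formulas \eqref{E:piV} and \eqref{E:Vpi}, verifying that each lands in the correct target class, and then checking they are mutually inverse. The crucial technical input will be Lemma \ref{L:LnBK}, which identifies $\C\B(\fS_*, \B(\K))$ with $\fS \ol\otimes \B(\K)$ completely isometrically. This identification is what allows us to translate between an operator $V \in \fS \ol\otimes \B(\K)$ and a completely bounded map on $\fS_*$: under the isomorphism, $V$ corresponds precisely to the slice map $\varphi \mapsto (\varphi \otimes \id)(V)$, which is formula \eqref{E:piV}. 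So the passage $V \mapsto \pi_V$ is, at the level of linear (indeed completely bounded) maps, nothing but this canonical identification, and the content to be verified is that $\pi_V$ is multiplicative exactly when $V$ satisfies the corepresentation identity.

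First I would verify the homomorphism property in one direction: if $V$ is a corepresentation, i.e.\ $V_{1,3}V_{2,3} = (\Delta_\fS \otimes \id)(V)$, then $\pi_V$ is a homomorphism. The key computation uses that the product on $\fS_*$ is $m_{\fS_*} = (\Delta_\fS)_*$, so that $\varphi \ast \psi = (\varphi \otimes \psi) \circ \Delta_\fS$. One expands
\[
\pi_V(\varphi)\pi_V(\psi) = \big((\varphi \otimes \id)(V)\big)\big((\psi \otimes \id)(V)\big)
\]
and recognizes this product as a slice of $V_{1,3}V_{2,3}$ by $\varphi \otimes \psi$ on the first two legs; applying the corepresentation identity rewrites it as $(\varphi \otimes \psi \otimes \id)((\Delta_\fS \otimes \id)(V)) = ((\varphi\ast\psi)\otimes\id)(V) = \pi_V(\varphi \ast \psi)$. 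Conversely, reading this chain of equalities backwards shows that if $\pi_V$ is multiplicative for all $\varphi,\psi$, then $V$ must satisfy the corepresentation identity, since the slice maps $\varphi \otimes \psi$ separate points of $\fS \ol\otimes \fS \ol\otimes \B(\K)$ (using $\fS \ol\otimes \fS = \fS \ol\otimes_\F \fS$ from \eqref{E:SS}). This gives the correspondence at the level of the homomorphism condition.

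Next I would check that the two formulas \eqref{E:piV} and \eqref{E:Vpi} genuinely describe inverse processes. Starting from $\pi$, formula \eqref{E:Vpi} defines a sesquilinear form on $\H \otimes \K$; since $\pi$ is completely bounded and $[\xi\eta^*]$ ranges over the rank-one functionals spanning a dense subspace of $\fS_*$ (recall $\fS_* = \{[\xi\eta^*] : \xi,\eta\in\H\}$ by property $\bA_1(1)$, Theorem \ref{T:ww}), this form is bounded and determines a unique operator $V_\pi \in \B(\H \otimes \K)$; complete boundedness of $\pi$ together with Lemma \ref{L:LnBK} places $V_\pi$ in $\fS \ol\otimes \B(\K)$. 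One then confirms that applying \eqref{E:piV} to $V_\pi$ recovers $\pi$ on each $[\xi\eta^*]$, i.e.\ $(\varphi\otimes\id)(V_\pi) = \pi(\varphi)$, which is immediate by pairing both sides against arbitrary $x,y\in\K$ and comparing with \eqref{E:Vpi}; and symmetrically that $V_{\pi_V} = V$. Combined with the previous paragraph, this shows $V$ is a corepresentation if and only if $\pi_V$ is a representation, completing the bijection.

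The main obstacle I anticipate is not any single computation but ensuring that $V_\pi$ defined through \eqref{E:Vpi} actually lies in $\fS \ol\otimes \B(\K)$ rather than merely in $\B(\H) \ol\otimes \B(\K) = \B(\H \otimes \K)$. This is precisely where complete boundedness of $\pi$ (as opposed to mere boundedness) and the full strength of Lemma \ref{L:LnBK} are indispensable: the completely bounded map $\pi: \fS_* \to \B(\K)$ corresponds under the lemma's isomorphism to an element of $\fS \ol\otimes \B(\K)$, and one must check that this element is exactly $V_\pi$. I would therefore organize the argument so that $V_\pi$ is \emph{defined} as the image of $\pi$ under the inverse of the isomorphism in Lemma \ref{L:LnBK}, and then derive \eqref{E:Vpi} as the resulting matrix-coefficient formula, which sidesteps the membership question entirely since membership is built into the construction.
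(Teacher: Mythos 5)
Your proposal is correct and follows essentially the same route as the paper: both hinge on Lemma \ref{L:LnBK} to identify $\C\B(\fS_*,\B(\K))$ with $\fS\ol\otimes\B(\K)$ via slice maps, both verify that the corepresentation identity corresponds exactly to multiplicativity through the same $(\varphi\otimes\psi\otimes\id)$-slicing computation, and both obtain \eqref{E:Vpi} from property $\bA_1(1)$ (Theorem \ref{T:ww}). In particular, your final-paragraph decision to \emph{define} $V_\pi$ as the preimage of $\pi$ under the identification of Lemma \ref{L:LnBK}, rather than through the sesquilinear form, is precisely how the paper sidesteps the membership question.
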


\begin{proof}
The following calculations show that, for a corepresentation $V$ of $\fS$, the map $\pi_V$ defined in \eqref{E:piV}
indeed yields a representation of $\fS_*$ on $\K$: For $\varphi\in\fS_*$, we have
\begin{align*}
(\varphi\otimes \id)(V)(\psi\otimes \id)(V)
&=(\varphi\otimes\psi\otimes \id)(V_{1,3}V_{2,3})\\
&=(\varphi\otimes\psi\otimes \id)((\Delta_\fS\otimes \id)(V))\\
&=\varphi\ast \psi(V).
\end{align*}
The fact that $\pi_V$ is completely bounded follows from the completely isometric isomorphism in Lemma \ref{L:LnBK}.

For $\pi\in \C\B(\fS_*, \B(\K))$, the identification given in Lemma \ref{L:LnBK} determines an operator
$V_\pi\in \fS\ol\otimes \B(\K)$ via $\pi(\varphi)=(\varphi\otimes \id)(V_\pi)$ for all $\varphi\in\fS_*$. Reversing the above proof,
we see that $V_\pi$ is a corepresentation of $\fS$.

In order to get \eqref{E:Vpi}, it suffices to make use of Theorem \ref{T:ww} stating that $\fS$ has property $\bA_1(1)$.
\end{proof}

It is time to look at some examples.

\begin{eg}
\label{Eg:W}
Let $W\in\B(\H_n\otimes\H_n)$ be defined by $W(\xi_u\otimes \xi_v)=\xi_{vu}\otimes \xi_v$. Then
one can show that $W$ is a proper isometry and intertwines $L_w\otimes L_w$ and $\id\otimes L_w$:
\[
(L_w\otimes L_w)W=W(\id\otimes L_w)\quad \mbox{for all}\quad w\in \Fn.
\]
Also one can check that $W$ is a corepresentation of $\L_n$. In fact, checking on the basis vectors, it is easy to see that
it satisfies the identity required in Definition \ref{D:corep}. In order to check $W\in \L_n\ol\otimes \B(\H_n)$, it suffices to
check that $W$ commutes each element in the commutant $(\L_n\ol\otimes \B(\H_n))'=\R_n\otimes \bC I$.
For $R_u\in\R_n$ ($u\in \Fn)$, then
a simple calculation shows that $W(R_u\otimes \id)=(R_u\otimes \id)W$. We are done.

It is not hard to check that the completely bounded representation $\pi_W$ of ${\L_n}_*$ corresponding to the above $W$ in Theorem \ref{T:corep}
is nothing but the left multiplication representation:
\[
\pi_W(\varphi)\xi_u=\varphi(L_u)\xi_u \quad (\varphi\in {\L_n}_*).
\]
\end{eg}

Two points are worth being mentioned here. Firstly, in the group case, the above corepresentation $W$ is known as the fundamental operator
in the theory of Hopf von Neumann algebras.
 Secondly, in spite of the fact that the above $W$ is a proper isometry,  the tuples
$[L_1\otimes L_1\,\cdots\, L_n\otimes L_n]$ and $[\id \otimes L_1\,\cdots\, \id\otimes L_n]$ are unitarily equivalent
because both of them are pure of multiplicity $\infty$ (cf. \cite{Pop89} and Section \ref{S:Ln}).

\begin{eg}
\label{Eg:Lw}
Fix $w\in \Fn$. Let $\rho_w: {\L_n}_*\to \bC$ be defined by $\rho_w(\varphi)=\varphi(L_w)$. Then it is a character
of ${\L_n}_*$. In fact, for all
$\varphi,\psi\in {\L_n}_*$, we have
\begin{align*}
\rho_w(\varphi\ast\psi)
&=\varphi\ast \psi(L_w)=\varphi\otimes\psi(\Delta(L_w))\\
&=\varphi\otimes\psi(L_w\otimes L_w)=\varphi(L_w)\psi(L_w)\\
&=\rho_w(\varphi)\rho_w(\psi).
\end{align*}
In particular, it is a completely contractive representation.
It is easy to check that its corresponding corepresentation $V_{\rho_w}$ of $\L_n$ in Theorem \ref{T:corep}
is given by
\[
V_{\rho_w}=L_w\otimes \id_\bC\cong L_w.
\]
\end{eg}

In what follows, as an amusing application of Theorem \ref{T:corep}, we identify the spectrum of $\fS_*$.
The proof also takes advantage of the useful idea of Fourier expansions of operators in $\L_n$ and $\L_n\ol\otimes\L_n$.

\begin{thm}
\label{T:char}
Let $\fS$ be an analytic free semigroup algebra generated by an isometric $n$-tuple. Then
the (Gelfand) spectrum of $\fS_*$ is $\Fn$.
\end{thm}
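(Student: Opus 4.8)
The plan is to use Theorem \ref{T:corep} to translate characters of $\fS_*$ into a concrete algebraic condition inside $\fS$, and then to solve that condition explicitly using Fourier expansions. First I would reduce to the model algebra: since $\fS_*\cong{\L_n}_*$ as Hopf convolution algebras (Corollary \ref{C:mor} together with the preceding isomorphism), the two have the same Gelfand spectrum, so it suffices to compute the spectrum of ${\L_n}_*$. A point of the spectrum is a nonzero multiplicative linear functional $\chi$ on ${\L_n}_*$; being a bounded functional into $\bC=\B(\bC)$ it is automatically completely bounded, hence it is exactly a completely bounded representation of ${\L_n}_*$ on the one-dimensional Hilbert space $\K=\bC$.

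Next I would invoke Theorem \ref{T:corep}: such representations correspond bijectively to corepresentations $V\in\L_n\ol\otimes\B(\bC)\cong\L_n$. Writing $V=T\in\L_n$ and unwinding the leg notation in the case $\K=\bC$, one has $V_{1,3}\cong T\otimes I$, $V_{2,3}\cong I\otimes T$ and $(\Delta\otimes\id)(V)\cong\Delta(T)$, so the corepresentation identity $V_{1,3}V_{2,3}=(\Delta\otimes\id)(V)$ collapses to the \emph{grouplike} condition
\[
\Delta(T)=T\otimes T,\qquad T\neq 0.
\]
(Equivalently, since $({\L_n}_*)^*=\L_n$, every such $\chi$ has the form $\varphi\mapsto\varphi(T)$, and multiplicativity $\chi(\varphi\ast\psi)=\chi(\varphi)\chi(\psi)$ reads $(\varphi\otimes\psi)(\Delta(T))=(\varphi\otimes\psi)(T\otimes T)$ for all $\varphi,\psi$, which yields the same equation because ${\L_n}_*\widehat\otimes{\L_n}_*$ separates points of $\L_n\ol\otimes\L_n$.)

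Then I would solve this equation via Fourier coefficients. Writing $T\sim\sum_{w}a_wL_w$, the comultiplication carries the diagonal expansion $\Delta(T)\sim\sum_w a_w\,L_w\otimes L_w$ (the two-factor analogue of the expansion computed in the Remark after Theorem \ref{T:coalg}), whereas $T\otimes T\sim\sum_{u,v}a_ua_v\,L_u\otimes L_v$. Comparing the coefficients of $L_u\otimes L_v$ — for instance by evaluating both operators at $\xi_\mt\otimes\xi_\mt$ and pairing with $\xi_u\otimes\xi_v$ — gives $a_ua_v=0$ for $u\neq v$ and $a_w^2=a_w$ for every $w$. Hence each $a_w\in\{0,1\}$ and at most one of them is nonzero; since $T\neq0$, exactly one coefficient equals $1$, so $T=L_{w_0}$ for a unique $w_0\in\Fn$. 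By Example \ref{Eg:Lw} the corresponding character is precisely $\rho_{w_0}(\varphi)=\varphi(L_{w_0})$, and distinct words give distinct characters because ${\L_n}_*$ separates the operators $L_w$. Thus $w\mapsto\rho_w$ is a bijection from $\Fn$ onto the spectrum of $\fS_*$.

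The main obstacle I anticipate is the rigorous bookkeeping in the Fourier-coefficient step: one must justify that $\Delta(T)$ really carries the diagonal expansion $\sum_w a_wL_w\otimes L_w$ as an element of $\L_n\ol\otimes\L_n=\L_\Lambda$, and that term-by-term comparison there is legitimate. This is handled by approximating $T$ by its Ces\'aro sums $\Sigma_k(T)$, applying the weak*-continuous homomorphism $\Delta$ (exactly as in the proof of Theorem \ref{T:coalg}), and reading off the coefficients from the vacuum vector, where convergence is controlled by $\sum_w|a_w|^2=\|T\xi_\mt\|^2<\infty$.
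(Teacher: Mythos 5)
Your proposal is correct and follows essentially the same route as the paper's own proof: both convert a character of ${\L_n}_*$ via Theorem \ref{T:corep} into a corepresentation $V\in\L_n$ (equivalently the grouplike condition $\Delta(T)=T\otimes T$), then compare Fourier coefficients of $\Delta(T)$ and $T\otimes T$ against the vacuum vector to force exactly one coefficient $a_{w_0}=1$ and conclude $T=L_{w_0}$, with Example \ref{Eg:Lw} supplying the inclusion $\Fn\subseteq\Sigma_{{\L_n}_*}$. Your parenthetical derivation of the grouplike identity directly from multiplicativity (using that ${\L_n}_*\widehat\otimes{\L_n}_*$ separates points of $\L_n\ol\otimes\L_n$) is an equivalent shortcut, not a different argument.
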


\begin{proof}
Obviously, it suffices to show that the spectrum $\Sigma_{{\L_n}_*}$ of ${\L_n}_*$ is $\Fn$ .

By Example \ref{Eg:Lw} above,  one has $\Fn\subseteq \Sigma_{{\L_n}_*}$.
In order to prove $\Sigma_{{\L_n}_*}\subseteq \Fn$, we first
observe that every operator $A\in \L_n\ol\otimes \L_n$ is completely determined by the image $A(\xi_\mt\otimes\xi_\mt)$ of
the ``vacuum vector'' $\xi_\varnothing\otimes \xi_\varnothing$ under $A$.
Indeed, assume that
\[
A(\xi_\varnothing\otimes \xi_\varnothing)=\sum_{u,v\in\Fn}a_{u,v}\, \xi_u\otimes \xi_v
\]
with $a_{u,v}\in\bC$.
Then for all $\alpha,\beta\in\Fn$, one has
\begin{align*}
A(\xi_\alpha\otimes \xi_\beta)
&=AR_{ \alpha}\otimes R_{\beta}(\xi_\varnothing\otimes \xi_\varnothing)\\
&=R_{\alpha}\otimes R_{\beta}A(\xi_\varnothing\otimes \xi_\varnothing)\\
&=R_{ \alpha}\otimes R_{\beta}(\sum_{u,v}a_{u,v}\, \xi_u\otimes \xi_v)\\
&=\sum_{u,v}a_{u,v}\, \xi_{u\widetilde\alpha}\otimes \xi_{v\widetilde\beta},
\end{align*}
where $\widetilde\alpha$ is the word in $\Fn$ obtained by reversing the order of $\alpha$,
and similarly for $\widetilde\beta$.
So similar to $\L_n$, we heuristically view $A$ as its Fourier expansion:
\[
A\sim \sum_{u,v\in\Fn}a_{u,v}\, L_u\otimes L_v,
\]
where the $(u,v)$-th Fourier coefficient is given by $a_{u,v}=(A(\xi_{\mt}\otimes\xi_{\mt}),\xi_{u}\otimes\xi_{v})$.

Now assume that $\varphi: {\L_n}_*\to \bC$ is a character of ${\L_n}_*$. Let $c_{1,1}^\varphi\in\L_n$ be the coefficient operator defined
by
\[
\langle f, c_{1,1}^\varphi\rangle = \varphi(f) \quad (f\in{\L_n}_*).
\]
Since $\varphi$ is a character, it is automatically a completely bounded representation. By Theorem \ref{T:corep}, we have that
$V:=c_{1,1}^\varphi\in\L_n$ is a corepresentation of $\L_n$. That is,
\begin{align}
\label{E:V}
V_{1,3}V_{2,3}= (\Delta\otimes \id_\bC)(V).
\end{align}
Since $V\in\L_n$, one can assume that it has the following Fourier expansion:
\[
V\sim\sum_{w\in\Fn}a_w\,L_w.
\]
Then it follows from \eqref{E:V} that
\begin{align}
\label{E:AW}
\sum_w a_wL_w\otimes \sum_w a_wL_w\ \sim\ \sum_w a_w(L_w\otimes L_w)\in\L_{n}\ol\otimes\L_n.
\end{align}
Then taking the $(w, w)$-th Fourier coefficients of both sides of \eqref{E:AW} gives
\begin{align*}
&((\sum_w a_wL_w\otimes \sum_w a_wL_w)(\xi_\mt\otimes\xi_\mt), \xi_w\otimes\xi_w)\\
&=(\sum_w a_w(L_w\otimes L_w)(\xi_\mt\otimes\xi_\mt), \xi_w\otimes\xi_w).
\end{align*}
This implies $a_w=0$ or $1$ for every $w\in\Fn$. On one hand, there is a word $w\in\Fn$ such that $a_w\ne 0$ as $\varphi\ne 0$ implies $V\ne 0$.
On the other hand, suppose that
there are $u\ne v\in\Fn$ such that both $a_u$ and $a_v$ are non-zero. Then
$a_u=a_v=1$.
Taking the $(u, v)$-th Fourier coefficients of both sides of \eqref{E:AW}, we have
\begin{align*}
1=a_ua_v&=((\sum_w a_wL_w\otimes \sum_w a_wL_w)(\xi_\mt\otimes\xi_\mt), \xi_u\otimes\xi_v)\\
&=(\sum_w a_w(L_w\otimes L_w)(\xi_\mt\otimes\xi_\mt), \xi_u\otimes\xi_v)\\
&=a_u\delta_{u,v}=0.
\end{align*}
This is an obvious contradiction. So $V=L_w$ for some $w\in\Fn$.
Therefore, $\Sigma_{{\L_n}_*}\subseteq \Fn$.
\end{proof}

\subsection*{Tensor products of completely bounded representations of $\fS_*$}
This subsection borrows from \cite[Section 7]{ER1}, and is rather sketched.
Refer to \cite[Section 7]{ER1} for details.

 Suppose that  $\pi_i: \fS_*\to \B(\H_i)$ ($i=1,2$) are two completely bounded representations of $\fS_*$.
Define their multiplication $\pi_1\times \pi_2$ as the composition of the following mappings

\begin{align*}
\fS_*\stackrel{\Delta_{\fS_*}}\longrightarrow \fS_*\stackrel{\rm{eh}}\otimes\fS_*\stackrel{\pi_1\otimes\pi_2}\longrightarrow &\B(\H_1)\stackrel{\rm{eh}}\otimes\B(\H_2)\\
\subseteq  &\B(\H_1)\stackrel{\sigma\rm{h}}\otimes\B(\H_2)\stackrel{\theta}\to \B(\H_1\otimes\H_2),
\end{align*}
where $\theta$ is determined by the product of
\[
\B(\H_1)\to \B(\H_1\otimes\H_2): S\mapsto S\otimes I_{\H_2}
\]
and
\[
\B(\H_2)\to \B(\H_1\otimes\H_2): T\mapsto  I_{\H_1}\otimes T.
\]

Let
\[
\C(\fS_*)=\left\{c_{\xi,\eta}^\pi: c_{\xi,\eta}^\pi(f)=(\pi(f)\xi, \eta),  \ f\in\fS_*,\,  \xi, \, \eta\in\H_\pi\right\}
\]
be the set of coefficient operators of completely bounded representations of $\fS_*$.
Then we have the following duality result.

\begin{thm}
\label{T:dual}
Suppose that $\fS$ is an analytic free semigroup algebra.
Then $\C(\fS_*)=\fS$.
\end{thm}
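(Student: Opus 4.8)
The plan is to establish the two inclusions $\C(\fS_*)\subseteq\fS$ and $\fS\subseteq\C(\fS_*)$ separately. Since $\fS$ and $\L_n$ are completely isometrically isomorphic as Hopf dual algebras (Corollary \ref{C:mor}), and since $\phi_*:{\L_n}_*\to\fS_*$ is a completely isometric algebra isomorphism, the set of coefficient operators transports along $\phi$; hence it suffices to prove $\C({\L_n}_*)=\L_n$. I would reduce the whole theorem to this case first.

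For the inclusion $\C({\L_n}_*)\subseteq\L_n$, the key observation is Theorem \ref{T:corep}: every completely bounded representation $\pi$ of ${\L_n}_*$ on $\K$ arises as $\pi=\pi_V$ for a unique corepresentation $V\in\L_n\ol\otimes\B(\K)$, and the coefficient operators are recovered by slicing. Concretely, for $\xi,\eta\in\K$ the coefficient operator $c^\pi_{\xi,\eta}\in\L_n$ is obtained by applying the slice map $\id\otimes\omega_{\xi,\eta}$ to $V$, where $\omega_{\xi,\eta}=[\xi\eta^*]\in\B(\K)_*$; since $V$ lies in $\L_n\ol\otimes\B(\K)$ and slice maps of this form land back in $\L_n$, every coefficient operator belongs to $\L_n$. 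This is where the precise form of Lemma \ref{L:LnBK}, identifying $\C({\L_n}_*,\B(\K))$ with $\L_n\ol\otimes\B(\K)$, does the real work, and it is the reason we get equality rather than the mere inclusion in \cite[Theorem 7.2]{ER1}: the coefficient operators are forced into $\L_n$ by the tensor-product structure, not merely into some larger algebra.

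For the reverse inclusion $\L_n\subseteq\C({\L_n}_*)$, I would exhibit, for each $w\in\Fn$, the generator $L_w$ explicitly as a coefficient operator. The natural candidate is the left regular (multiplication) representation described in Example \ref{Eg:W}: take $\pi=\pi_W$ acting on $\H_n$, so that $\pi_W(\varphi)\xi_u=\varphi(L_u)\xi_u$. Then the coefficient operator $c^{\pi_W}_{\xi_\mt,\xi_w}$ satisfies $c^{\pi_W}_{\xi_\mt,\xi_w}(f)=(\pi_W(f)\xi_\mt,\xi_w)=f(L_\mt)\,\delta_{\mt,w}$, which is not quite $L_w$; so I would instead pair basis vectors to isolate $L_w$, using $(\pi_W(f)\xi_u,\xi_u)=f(L_u)$ to recover the functional $f\mapsto f(L_u)$, whose corresponding coefficient operator is exactly $L_u$ by the defining duality $\langle f,L_u\rangle=f(L_u)$. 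Running over all $u\in\Fn$ yields every $L_u\in\C({\L_n}_*)$, and since $\C({\L_n}_*)$ is closed under the relevant operations and weak*-closed (being the image of a $\wot$-closed algebra under the coefficient construction), it contains all of $\L_n$.

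The main obstacle I anticipate is verifying that $\C(\fS_*)$ is genuinely an algebra that is weak*-closed, so that generating $\{L_w:w\in\Fn\}$ actually yields all of $\L_n$ rather than just their linear span. The tensor-product construction $\pi_1\times\pi_2$ of completely bounded representations, borrowed from \cite[Section 7]{ER1}, is precisely the device that makes $\C(\fS_*)$ multiplicatively closed: the coefficient operator of $\pi_1\times\pi_2$ computes the product of coefficient operators, so closure under multiplication follows from closure of the class of completely bounded representations under $\times$. I would therefore first record this multiplicativity, then combine it with the two inclusions above to conclude $\C(\fS_*)=\fS$.
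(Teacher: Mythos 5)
Your route is the paper's route: the containment $\C(\fS_*)\subseteq\fS$ plus the algebra structure of $\C(\fS_*)$ (closure under products via the tensor product $\pi_1\times\pi_2$ of representations) is exactly what the paper imports from \cite[Theorem 7.2]{ER1}, and exhibiting each $L_u$ as a coefficient operator is Example \ref{Eg:Lw} (your variant, $c^{\pi_W}_{\xi_u,\xi_u}=L_u$ via the diagonal representation $\pi_W$, is equivalent). One misattribution first: the slice-map/Lemma \ref{L:LnBK} argument you credit with ``forcing equality'' only gives the \emph{easy} inclusion $\C({\L_n}_*)\subseteq\L_n$ --- indeed a coefficient operator is by definition a bounded functional on ${\L_n}_*$, hence automatically an element of $({\L_n}_*)^*=\L_n$. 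The entire difference between the paper's equality and the inclusion in \cite[Theorem 7.2]{ER1} lies in the \emph{reverse} inclusion $\fS\subseteq\C(\fS_*)$.

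And that is where your proposal has a genuine gap. Your final step asserts that $\C({\L_n}_*)$ is weak*-closed ``being the image of a $\wot$-closed algebra under the coefficient construction.'' This is not a justification: $\C({\L_n}_*)$ is a union of coefficient sets over \emph{all} completely bounded representations, with no uniform bound on $\|\pi\|_{cb}$ or on the vector norms, and it is not the image of $\L_n$ under any map whose continuity could transfer closedness. Without weak*-closedness, what your argument actually proves is only that $\C({\L_n}_*)$ contains the unital algebra generated by $\{L_w:w\in\Fn\}$, i.e. the polynomials $\bC\Fn$; even throwing in \emph{all} coefficients of $\pi_W$ and closing under finite direct sums and tensor products, you land in the subalgebra $\{\sum_w a_wL_w:\sum_w|a_w|<\infty\}$ of operators with absolutely summable Fourier coefficients. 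That is a weak*-dense but \emph{proper} subalgebra of $\L_n$ (already for $n=1$ it omits every $H^\infty$ function not in the disk algebra, e.g. infinite Blaschke products). Worse, this is not an accident of your examples: the Fourier coefficients $V_w$ of any corepresentation are mutually orthogonal idempotents, and whenever they are self-adjoint (as for $\pi_W$ and its amplifications) one has $\sum_w|(V_wx,y)|\le\|x\|\,\|y\|$, so such representations can \emph{never} produce a coefficient operator outside this $\ell^1$ subalgebra. Thus the reverse inclusion genuinely requires either realizing a weak*-dense bounded set with uniformly controlled $\|\pi\|_{cb}$ and vectors (so that one can pass to weak* limits by a direct sum/ultraproduct argument), or a direct construction of non-self-adjoint corepresentations realizing an arbitrary $A\in\L_n$; neither appears in your proposal. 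The paper compresses this step into its citation of \cite[Theorem 7.2]{ER1} together with Example \ref{Eg:Lw}; your reconstruction makes the step explicit, but the reason you give for it does not hold up.
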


\begin{proof}
This directly follows from \cite[Theorem 7.2]{ER1} and Example \ref{Eg:Lw}.
\end{proof}

\medskip

Let us end this paper with two remarks.
Firstly, most of the results on Hopf convolution dual algebras of this paper
hold true for more general ones, e.g., those induced from Hopf dual algebras
$\A$ with the property $(\A\ol\otimes\A)_*\cong\A_*\widehat\otimes\A_*$.
Secondly, as we have seen from above, there is a certain analogy between the Fourier algebra $A(\bF_n)$ and ${\L_n}_*$.
So, basically, for whatever property $A(\bF_n)$ has, one could ask if it holds true for ${\L_n}_*$ as long as it makes sense.
This and more will be investigated in the future.

\medskip
\noindent
\textbf{Acknowledgements.} I would like to thank my colleague Prof. Zhiguo Hu for several useful conversations
at the early stage of this work, and Prof. Matthew Kennedy for some useful discussions after
I gave a talk at the CMS 2011 Winter Meeting, in which some results of this paper were
presented. Thanks also go to Prof. Laurent Marcoux for showing me some references.

\end{document}